\providecommand{\keywords}[1]
{
  \small	
  \textbf{Keywords:} #1
}
\providecommand{\msc}[1]
{
  \small	
  \textbf{AMS subject classification:} #1
}
\newcommand{\bbA}{\mathbb A}
\newcommand{\bbE}{\mathbb E}
\newcommand{\bbF}{\mathbb F}
\newcommand{\bbH}{\mathbb H}
\newcommand{\bbN}{\mathbb N}
\newcommand{\bbP}{\mathbb P}
\newcommand{\bbQ}{\mathbb Q}
\newcommand{\bbR}{\mathbb R}
\newcommand{\scB}{\mathcal B}
\newcommand{\scC}{\mathcal C}
\newcommand{\scE}{\mathcal E}
\newcommand{\scF}{\mathcal F}
\newcommand{\scJ}{\mathcal J}
\newcommand{\scK}{\mathcal K}
\newcommand{\scL}{\mathcal L}
\newcommand{\scM}{\mathcal M}
\newcommand{\scO}{\mathcal O}
\newcommand{\scP}{\mathcal P}
\newcommand{\veps}{\varepsilon}
\newcommand{\norm}[1]{\ensuremath{\left\| #1 \right\|}}
\newcommand{\abs}[1]{\ensuremath{\left| #1 \right|}}
\DeclareMathOperator{\tr}{Tr}
\newcommand{\indicator}[1]{\ensuremath{\mathbf{1}_{\crl{#1}}}}
\newcommand{\half}{\frac{1}{2}}
\newcommand{\crl}[1]{\ensuremath{ \left\{ #1 \right\} }}
\newcommand{\edg}[1]{\ensuremath{ \left[ #1 \right] }}
\newcommand{\brak}[1]{\ensuremath{\left( #1 \right)}}
\newtheorem{theorem}{Theorem}[section]
\newtheorem{definition}[theorem]{Definition}
\newtheorem{proposition}[theorem]{Proposition}
\newtheorem{corollary}[theorem]{Corollary}
\newtheorem{lemma}[theorem]{Lemma}
\newtheorem{remark}[theorem]{Remark}
\newtheorem{example}[theorem]{Example}
\newtheorem{examples}[theorem]{Examples}
\newtheorem{foo}[theorem]{Remarks}
\newtheorem{assumption}[theorem]{Assumption}
\newenvironment{Example}{\begin{example}\rm}{\end{example}}
\newenvironment{Remark}{\begin{remark}\rm}{\end{remark}}
\numberwithin{equation}{section}
\title{Strong solutions of mean-field FBSDEs and their applications to multi-population mean-field games}
\author[1]{Kihun Nam}
\author[1,$\dagger$]{Yunxi Xu}
\affil[1]{School of Mathematics, Monash University}
\begin{document}

\maketitle

\renewcommand{\thefootnote}{\fnsymbol{footnote}}
\setcounter{footnote}{2}  
\footnotetext[2]{Corresponding author: \texttt{Yunxi.Xu@monash.edu}}

	\begin{abstract}
We investigate the existence of strong solutions for mean-field forward-backward stochastic differential equations (FBSDEs) with measurable coefficients and their implications on the Nash equilibrium of a multi-population mean-field game (MPMFG). Our analysis focuses on cases where the coefficients may be discontinuous in the forward process and non-Lipschitz continuous with respect to their time-sectional distribution. Using the Pontryagin stochastic maximum principle and the martingale approach, we apply our existence result to a MPMFG model where the interacting agents in the system are grouped into multiple populations, each population sharing a common objective function.
\end{abstract}
	\setcounter{equation}{0}
\keywords{mean-field FBSDEs; multi-population mean-field game; strong solutions; irregular coefficients; Girsanov transformation}	\\
\msc{60H10, 60H30}
	\section{Introduction}
    \begin{sloppypar}
In our framework, we establish sufficient conditions for the existence of a strong solution to the following mean-field forward-backward stochastic differential equations (FBSDEs). Specifically, for $H\in\mathbb{N}$ and $i\in\{1,\dots,H\}$, we consider the system
	\begin{equation}\label{intro fbsde}\begin{aligned}
			&dX^i_t=b^i(t,X^i_t,Y^i_t,Z^i_t,\scL^{H}(X_t))dt+\sigma^i(t,X^i_t)dW^i_t\\
			&dY^i_t=-f^i(t,X^i_t,Y^i_t,Z^i_t,\scL^{H}(X_t))dt+Z^i_tdW^i_t\\
			&X^i_0=x^i;\ Y^i_T=g^i(X^i_T,\scL^{H}(X_T)).
		\end{aligned}
	\end{equation}
Here $\{W^i\}_{i\in\{1,\cdots,H\}}$ are independent multidimensional Brownian motions, and $\scL^{H}(X_t)=\brak{\scL(X^i_t)}_{i\in\{1,\dots,H\}}$ represents the marginals of $X^i$. By decoupling \eqref{intro fbsde} using the Girsanov transform, we prove the existence of a \emph{strong} solution when i) the coefficients may be discontinuous with respect to the state variable, ii) $b$ is not uniformly bounded, and iii) the coefficients are not necessarily Lipschitz with respect to all the variables. Finally, we apply these results to a multi-population mean-field game (MPMFG).
\end{sloppypar}	
	FBSDE \eqref{intro fbsde} is a probabilistic representation of the non-linear MPMFG model. The model captures the evolution of large groups of interacting agents, each pursuing individual goals. However, when the number of agents is large, all the agent-to-agent interactions and the global behavior become intractable. To address this, the mean-field approach replaces the collective influence of all other agents on an individual agent with their average effect. Such an idea is named the mean-field approach borrowed from statistical physics. The goal of the theory is to derive effective equations for the optimal behavior of any single player when the size of the population grows to infinity. Unlike single-population mean-field game models, where agents share identical characteristics, MPMFG models consider multiple populations, each with distinct shared traits. Such an environment with several populations is common in practice. For instance, in the study of \cite{lee2022mean}, the MPMFG framework was applied to analyze population relocation and vaccine distribution during the COVID pandemic.  \cite{casgrain2020mean} utilized MPMFGs to examine the trading actions of agents with different beliefs. \cite{aurell2018mean} investigated mean-field control problems involving multiple populations to model crowd motion. In a series of works, \cite{banez2020belief} and \cite{gao2021belief}, 
 explored MPMFGs derived from some specific models, focusing on the role of different beliefs. \cite{huang2006large} introduced stochastic dynamic games in large populations where multi-class agents are coupled via their dynamics and costs.
	Furthermore, \cite{wang2020mean} took the idea of FBSDEs and applied it to study how to stabilize a system in a broader context of mean-field control problems. In a related study, \cite{li2023linear} used a similar approach, mean-field linear FBSDEs, to investigate a specific type of game where shared random factors influence the players' decisions. Moreover, they obtained a unique classical solution over an arbitrary time horizon for FBSDE without requiring monotonicity conditions. 
	
	While most of the existing literature assumes the coefficients in \eqref{intro fbsde} to be continuous, (mean-field) FBSDE with discontinuous coefficients have also been studied. For instance, \cite{carmona2012singular} studied a model about pricing carbon emission allowances where the terminal condition is not continuous, later extended to a mean-field framework in \cite{carmona2013control}. Also, \cite{carmona2015probabilistic} investigated discontinuous mean-field game under weak formulation. More recently, strong solutions of FBSDEs with discontinuous coefficients have been studied by \cite{luo2022strong} and \cite{nam2022coupled}. However, the existence of strong solutions for mean-field FBSDEs with discontinuous coefficients remains an open question. In this work, we address this gap by providing a set of sufficient conditions under which strong solutions can exist. This contribution extends the understanding of mean-field FBSDEs and their applicability to models with discontinuous dynamics.

	This paper is organized as follows: Section 2 introduces the definitions and notations. In Section 3, we present the main result, Theorem \ref{main theorem}, in the context of mean-field FBSDE. In Corollary \ref{unique solution}, we provide uniqueness results, and later we offer a counterexample where the uniqueness does not hold. In Section 4, we formulate our mathematical model of MPMFG and apply the Pontryagin Stochastic Maximum Principle. Using the martingale approach, we prove an MPMFG has a closed-loop Nash equilibrium. 
	
	\section{Notations and definitions}
	For any normed space $E$, an Euclidean space, $\scP(E)$ denote the set of probability measures on $(E,\scB(E))$ where $\scB(E)$ is the Borel $\sigma$-algebra on $E$. We define $\scC([0,T];\scP(E))$ as the space of continuous functions from $[0,T]$ to $\scP(E)$.  
	In particular, we denote by $\scP_1(E)$ the set of probability measures with a finite first-order moment, and $\scP^n_1(E)$ the product space as
	\begin{align*}
\scP^n_1(E):=\underbrace{\scP_1(E)\times\cdots\times\scP_1(E)}_\text{n times}. 
	\end{align*}
	For
	$\mu$ and $\nu$ in $\scP_1(E)$, the Wasserstein distance is defined by the formula  
	\begin{align*}
		W_1(\mu,\nu)&=\inf\left\{\int d(x,y)\pi(dx,dy);\pi\in\scP(E\times E)\ \text{with marginals $\mu$ and $\nu$}\right\}\\
		&=\sup_{h\in Lip_1}\left\{\left|\int hd\mu-\int hd\nu\right| \right\}
	\end{align*}
	where $Lip_1$ stands for the space of all real-valued functions that are $1$-Lipschitz continuous with respect to $|\cdot|_E$. Note the second expression is also known as the definition of Kantorovich-Rubinstein norm, which coincides with the $1$-Wasserstein distance $W_1$; for more details, we refer to Proposition $5.3$ and Corollary $5.4$ in \cite{carmona2018probabilistic}. For $m$ and $m'$ in $\scP^n_1(E)$, we define the distance
	\begin{align*}
		\scK(m,m')=\sum_{i=1}^n W_1(m_i,m'_i).
	\end{align*}
	
	Throughout the paper, we consider a vector as a column matrix and, for a matrix $A$, we let $A^\intercal$ be the transpose of $A$ and
	$|A|:=\sqrt{\tr(AA^\intercal)}$ be the Euclidean norm.
Let $H$ be a natural number and $\scO$ be the set $\{1,\cdots,H\}$. We consider a filtered probability space $(\Omega,\scF,\bbP)$ with $H$ independent $d$-dimensional Brownian motions $\crl{W^i: i=1,2,..., H}$ defined on it and the augmented filtration $\bbF$ generated by them. We denote $\bbE_\bbQ$ for the expectation with respect to a probability measure $\bbQ$ defined on $(\Omega,\scF)$ and we use shorthand notation $\bbE:=\bbE_\bbP$. For a terminal time $T>0$, we define
	\begin{align*}
    	&\bbH^{2}(k,l):=\left\{ X:\brak{X^1,\cdots,X^k}:\right.\\
        &\left. X^i\text{ is $\bbF$-progressively measurable $\bbR^l$-valued processes and }\norm{X}_{\bbH^{2}(k,l)}<\infty\right\},\\
		&\text{and} \norm{X}_{\bbH^{2}(k,l)}:=\max_{i\in\{1,\cdots,k\}}\bbE\brak{\int_0^T|X^i_s|^2ds}^{\frac{1}{2}}.
	\end{align*}
	When $k$ and $l$ are clear from context, we abbreviate this as $\bbH^2$.
	We denote $\scL(\xi)$ as the distribution of a random variable $\xi$. For $X\in \bbH^2(H,m)$, we let $\scL^H(X_t)$ be the marginals $\brak{\scL(X_t^i)}_{i\in\{1,\cdots,H\}}$.
	The Dol\'eans-Dade exponential of a local martingale $M$ is denoted by $\scE(M)$. Note that we will repeatedly use the notation $C$ for arbitrary constants, which may vary from line to line. To emphasize dependence on specific parameters, we will use subscripts (e.g., $C_a$), where $a$ denotes the relevant parameter. Unless stated otherwise, we use equalities and inequalities between random variables in almost sure sense. We will use the symbol $a\lesssim b$ if there exists a constant $C>0$ such that $a\leq C b$.

	 For each $i\in \scO$, we consider measurable functions
	\begin{align*}
		&h^i:[0,T]\times\bbR^m\rightarrow\bbR^m\\
		&b^i:[0,T]\times\bbR^m\times \bbR^n\times\bbR^{n\times d}\times\scP^H_1(\bbR^m)\rightarrow\bbR^m\\
		&f^i:[0,T]\times\bbR^m\times \bbR^n\times\bbR^{n\times d}\times\scP^H_1(\bbR^m)\rightarrow\bbR^n\\
		&\sigma^i:[0,T]\times\bbR^m  \rightarrow\bbR^{m\times d}  \\&g^i:\bbR^m\times\scP^H_1(\bbR^m)\rightarrow\bbR^n.
	\end{align*}
	We are interested in the following coupled mean-field FBSDE 
	\begin{align}
		\begin{split}
			\label{fbsde}
&dX^i_t=\brak{h^i(t,X^i_t)+b^i(t,X^i_t,Y^i_t,Z^i_t,\scL^{H}(X_t))}dt+\sigma^i(t,X^i_t)dW^i_t\\
			&dY^i_t=-f^i(t,X^i_t,Y^i_t,Z^i_t,\scL^{H}(X_t))dt+Z^i_tdW^i_t\\
			&X^i_0=x^i;\ Y^i_T=g^i(X^i_T,\scL^{H}(X_T)),
		\end{split}
	\end{align}
	where, for each $i$, $x^i\in\bbR^m$.

	\section{Wellposedness of mean-field FBSDE}
	\label{existence}
    \subsection{Existence of a Solution}
	In this section, we assume the following conditions on the coefficients of \eqref{fbsde}.
	\begin{assumption}
		\label{existence assumption}
		There exist nonnegative constants $C$, $r$ and a nondecreasing function \(\rho_r:\bbR_+\to\bbR_+\) satisfying $\rho_r\equiv 0$ for $r>0$, such that for all \(i\in\scO\), the following hold:
		\begin{itemize}
			\item For all $(t,x,y,z,m)\in[0,T]\times\bbR^m\times \bbR^n\times\bbR^{n\times d}\times\scP^H_1(\bbR^m)$, we have
			\begin{align*}
				|b^i(t,x,y,z,m)|&\leq C(1+\rho_r(|y|))\\
				|g^i(x,m)|&\leq C(1+|x|^r)\\
				|f^i(t,x,y,z,m)|&\leq C(1+|x|^r+|y|+|z|).
			\end{align*}
			\item For all $t\in[0,T], x,x'\in\bbR^m$, there exists a constant $\veps>0$ such that
			\[
			\veps^{-1}|x'|^2\leq (x')^\intercal(\sigma^i(\sigma^i)^\intercal)(t,x) x'\leq \veps|x'|^2\\
			\]
			\item For each $(t,x)\in[0,T]\times\bbR^m$, $b^i(t,x,y,z,m)$ is continuous in $(y,z,m)$.
			\item Either of the following conditions hold for all $i$:
			\begin{itemize}
				\item For each $t$, $\sigma^i(t,x)$ is locally Lipschitz with respect to $x$ and we have $\sup_{(t,x)\in[0,T]\times\bbR^m}|h^i(t,x)|\leq C$.
				\item $\sigma^i$ is a constant matrix and $|h^i(t,x)|\leq C(1+|x|)$ for all $x$.
			\end{itemize}
			\item For each $x$, $g^i(x,m)$ is continuous in $m$.
			\item The function $f^i(t,x,y,z,m)$ is continuous in $(y,z,m)$ and either of the following conditions hold globally in $i$ and $(t,x)$:
			\begin{itemize}
				\item $f^i(t,x,y,z,m)$ is Lipschitz continuous in $(y,z)$.
				\item $n=1$ and $f^i(t,x,y,z,m)$ is uniformly continuous with respect to $(y,z)$. 
			\end{itemize}
		\end{itemize}
	\end{assumption}
To find a solution for \eqref{fbsde}, we employ a topological fixed point theorem on the mapping
\[
\mu \mapsto \left(\scL^H(X^\mu_t)\right)_{t\in[0,T]}=: \left(\psi_t(\mu)\right)_{t\in[0,T]}
\]
defined on $\scC\brak{[0,T];\scP^H_1(\bbR^m)}$ where \(X^\mu\) denotes the solution of \eqref{fbsde} obtained by replacing \(\scL^H(X_t)\) with the given \(\mu_t\).

\begin{lemma}\label{fbsde_ext_given_mu}
	For any given \(\mu\in\scC\left([0,T];\scP^H_1(\bbR^m)\right)\), the FBSDE
	\begin{equation}\label{fbsde_given_mu}
		\begin{aligned}
			dX^{i,\mu}_t &= \Bigl(h^i(t,X_t^{i,\mu})+b^i(t,X_t^{i,\mu},Y_t^{i,\mu},Z_t^{i,\mu},\mu_t)\Bigr)dt + \sigma^i(t,X_t^{i,\mu})\,dW^{i}_t,\\[1mm]
			dY^{i,\mu}_t &= -f^i(t,X_t^{i,\mu},Y_t^{i,\mu},Z_t^{i,\mu},\mu_t)dt + Z^{i,\mu}_t\,dW^{i}_t,\\[1mm]
			X_0^{i,\mu} &= x^i,\quad Y^{i,\mu}_T = g^i(X^{i,\mu}_T,\mu_T),
		\end{aligned}
	\end{equation}
	has a unique strong solution \((X^\mu,Y^\mu,Z^\mu)\in\bbH^2\times\bbH^2\times\bbH^2\).
\end{lemma}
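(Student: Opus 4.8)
The plan is to exploit the product structure of \eqref{fbsde_given_mu} to reduce to a single, non–mean-field FBSDE, and then to solve that FBSDE by the Girsanov decoupling advertised in the introduction: a change of measure that absorbs the $b$-drift turns the coupled system into a forward SDE with merely measurable coefficients together with an (a priori decoupled) BSDE.

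First I would fix $\mu\in\scC([0,T];\scP^H_1(\bbR^m))$ and observe that the $H$ subsystems in \eqref{fbsde_given_mu} are mutually independent: the $i$-th triple is driven only by $W^i$ and by the now-deterministic curve $t\mapsto\mu_t$, and $(X^\mu,Y^\mu,Z^\mu)\in\bbH^2\times\bbH^2\times\bbH^2$ iff each component lies in the corresponding $\bbH^2(1,\cdot)$. Absorbing $\mu$ into the time variable — which preserves measurability in $t$ and continuity in $(y,z)$ because the coefficients are continuous in $m$ and $t\mapsto\mu_t$ is continuous, and preserves the growth and ellipticity bounds uniformly in $m$ — it suffices to prove: for a $d$-dimensional Brownian motion $W$ and coefficients $h,b,f,\sigma,g$ as in Assumption \ref{existence assumption}, the FBSDE $dX_t=(h+b)(t,X_t,Y_t,Z_t)\,dt+\sigma(t,X_t)\,dW_t$, $dY_t=-f(t,X_t,Y_t,Z_t)\,dt+Z_t\,dW_t$, $X_0=x$, $Y_T=g(X_T)$, has a unique strong $\bbH^2$ solution.

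For the latter I would proceed as follows. Let $\bar X$ solve the reference forward equation $d\bar X_t=h(t,\bar X_t)\,dt+\sigma(t,\bar X_t)\,dW_t$, $\bar X_0=x$: under the first branch of the $(\sigma,h)$-dichotomy ($\sigma$ locally Lipschitz in $x$, uniformly elliptic — hence bounded — and $h$ bounded measurable) strong existence and pathwise uniqueness are the Zvonkin--Veretennikov theory for bounded measurable drift, and under the second branch ($\sigma$ constant, uniformly elliptic, $h$ measurable with linear growth) the same holds after the standard localization, and in either case $\bbE[\sup_{t\le T}|\bar X_t|^p]<\infty$ for every $p$, so $g(\bar X_T)$ and $(t,\omega)\mapsto 1+|\bar X_t|^r$ have moments of all orders. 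Next, for a candidate $(Y,Z)$ put $\theta_t:=\sigma^\intercal(\sigma\sigma^\intercal)^{-1}(t,\bar X_t)\,b(t,\bar X_t,Y_t,Z_t)$, which by the growth bound on $b$ and uniform ellipticity satisfies $|\theta_t|\le C(1+\rho_r(|Y_t|))$ — crucially with no dependence on $Z$. When $r>0$ this is bounded since $\rho_r\equiv 0$; when $r=0$ the terminal datum $g$ is bounded and $f$ has sublinear growth in $(y,z)$, so the a priori $L^\infty$-estimate for the BSDE (linearize $f$ along the solution and use that $f(t,\cdot,0,0)$ is bounded; or, when $n=1$ and $f$ is only uniformly continuous, use comparison) gives $\norm{Y}_\infty<\infty$ and hence $\theta$ bounded. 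Either way Novikov's condition holds, $\scE(\int_0^\cdot\theta_s\,dW_s)$ is a uniformly integrable martingale defining $\bbQ\sim\bbP$, and $W^\bbQ:=W-\int_0^\cdot\theta_s\,ds$ is a $\bbQ$-Brownian motion under which $\bar X$ solves the full forward equation. It then remains to produce an adapted $(Y,Z)\in\bbH^2$ solving $dY_t=-f(t,\bar X_t,Y_t,Z_t)\,dt+Z_t\,dW^\bbQ_t$, $Y_T=g(\bar X_T)$; since $\bbQ$ itself depends on $(Y,Z)$, I would set this up as a fixed point for the map sending $(Y,Z)$ to the solution of this $\bbQ$-BSDE with driver $f$ (solvable by Pardoux--Peng when $f$ is Lipschitz, by Lepeltier--San Mart\'in when $n=1$ and $f$ is uniformly continuous), the relevant a priori bounds not deteriorating along the iteration, and transferring the $\bbH^2(\bbQ)$-estimates to $\bbH^2(\bbP)$ via H\"older and the exponential integrability of $d\bbQ/d\bbP$; a fixed point yields the solution. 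Alternatively one may simply check that the single FBSDE above is within the scope of the strong-solution results for FBSDEs with measurable coefficients of \cite{nam2022coupled} and \cite{luo2022strong} and quote them. Uniqueness for fixed $\mu$ then follows from pathwise uniqueness of the forward SDE, uniqueness of the BSDE solution, and the fact that the measure change is pinned down by $(Y,Z)$ — it is also a special case of Corollary \ref{unique solution}.

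The main obstacle is the interplay of the two irregularities: discontinuity in $x$ is handled cleanly for the forward equation by Zvonkin--Veretennikov and is invisible to the BSDE (which sees $\bar X$ only as an input process), but because $b$ is unbounded in the $r=0$ regime the Girsanov density is controllable only once $Y$ is known to be bounded, so the a priori $L^\infty$-bound on $Y$ must be obtained \emph{before} the change of measure — hence the fixed-point formulation rather than a one-shot construction, and hence the need for the specific structure of Assumption \ref{existence assumption} ($g$ bounded and $f$ sublinear in $(y,z)$ exactly when $\rho_r\not\equiv 0$). A secondary technical point is verifying that freezing the merely continuous curve $\mu$ leaves the coefficients jointly measurable in $(t,x,y,z)$ with the Carath\'eodory-type regularity the above arguments require, and that the triple produced is adapted to the augmented Brownian filtration, so that the solution is genuinely strong.
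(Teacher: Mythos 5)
Your overall strategy (freeze $\mu$, decouple across $i$, remove the $b$-drift by Girsanov, distinguish the $r>0$ regime where $\rho_r\equiv 0$ makes $b$ bounded from the $r=0$ regime where an a priori $L^\infty$ bound on $Y$ is needed first) is the same Girsanov-decoupling route the paper takes, and your fallback of simply invoking Theorem 2.3 of \cite{nam2022coupled} is in fact the paper's entire proof in the Lipschitz case. But the detailed construction you give has a genuine gap at exactly the point the lemma is about. What your argument produces is the tuple $(\bar X, Y, Z, W^{\bbQ},\bbQ)$: under $\bbQ$ the process $\bar X$ has drift $h+b$ \emph{with respect to the new Brownian motion} $W^{\bbQ}$, so this is a weak solution of \eqref{fbsde_given_mu}. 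It is not a solution under $\bbP$ driven by the original $W$ (under $\bbP$ the drift of $\bar X$ is just $h$), and adaptedness to $\bbF^W$ — which you flag as the "secondary technical point" — is necessary but nowhere near sufficient. The missing step, which is the heart of the paper's argument (conditions (H2) and (H4) and Lemma \ref{measure change lemma} in the appendix), is: (i) the Markovian BSDE theory of \cite{hamadene1997bsdes}, \cite{hamadene2003multidimensional}, \cite{fan2010uniqueness} gives Borel functions $u,d$ with $Y_t=u(t,\bar X_t)$, $Z_t=d(t,\bar X_t)$; (ii) the forward SDE with the substituted, merely measurable bounded drift $b(t,x,u(t,x),d(t,x))$ has a pathwise unique strong solution driven by the original $W$ (verified via \cite{gyongy2001stochastic}, \cite{menoukeu2019flows}); (iii) weak existence plus pathwise uniqueness then upgrades the weak solution to the claimed strong one. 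Without (i)--(iii) you have not proved the statement.

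Two further points. First, your fixed-point formulation — iterate $(Y,Z)\mapsto$ solution of the $f$-driven BSDE under the $(Y,Z)$-dependent measure $\bbQ$ — is asserted to converge without any contraction or compactness argument; the standard device, used in the paper and in \eqref{decoupled fbsde}, is to absorb the Girsanov correction into the driver, i.e.\ solve the single BSDE with driver $\bar f(t,x,y,z)=f(t,x,y,z)+z\tilde b(t,x,y,z)$ under $\bbP$, which eliminates the circularity entirely (the correction has linear growth in $z$ since $\tilde b$ is bounded, so the Lipschitz/uniformly-continuous BSDE theory still applies). Second, your remark that uniqueness "is also a special case of Corollary \ref{unique solution}" is incorrect: that corollary requires $b$ to be independent of $(y,z)$, whereas \eqref{fbsde_given_mu} remains coupled for fixed $\mu$; uniqueness here comes instead from the uniqueness assertion of Lemma \ref{measure change lemma} (equivalently, Theorem 2.3 of \cite{nam2022coupled}).
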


Since \(\mu\) is given, FBSDE \eqref{fbsde_given_mu} becomes a Markovian FBSDE. We will apply the arguments in Lemma \ref{measure change lemma} to complete the proof.

\begin{proof}
	When \(f\) is Lipschitz in \((y,z)\), the claim follows immediately from Theorem 2.3 in \cite{nam2022coupled}. On the other hand, if \(f\) is only uniformly continuous in \((y,z)\) and \(n=1\), the existence and uniqueness of the solution can be established by applying the results in \cite{hamadene2003multidimensional}, \cite{fan2010uniqueness}, and \cite{hamadene1997bsdes}, which further guarantee the existence of Borel measurable functions \(u^{i,\mu}\) and \(d^{i,\mu}\) such that
	\[
	Y^{i,\mu}_t=u^{i,\mu}(t, X^{i,\mu}_t) \quad \text{and} \quad Z^{i,\mu}_t=d^{i,\mu}(t, X^{i,\mu}_t).
	\]
	Moreover, assumptions (H1) and (H4) in Appendix \ref{measure change} are verified by the results in \cite{gyongy2001stochastic} and \cite{menoukeu2019flows}. Therefore, by Lemma \ref{measure change lemma}, the claim follows.
\end{proof}
	
	\begin{lemma}
		\label{lemma_mu}
There exists a constant $C$, which does not depend on $\mu$, such that for any $\mu\in\scC\brak{[0,T];\scP^H_1(\bbR^m)}$, 
		\[
		\sup_{t\in[0,T]}\bbE[|X^{i,\mu}_t|^2]\leq C\qquad\text{ and } \qquad\scK(\psi_t(\mu),\psi_s(\mu))\leq C|t-s|^{1/2}.
		\]
		In particular, we have $\psi(\mu)\in \scC\brak{[0,T];\scP^H_1(\bbR^m)}$.
	\end{lemma}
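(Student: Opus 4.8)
The plan is to derive the two estimates from standard moment bounds for the (now Markovian, decoupled) system \eqref{fbsde_given_mu}, using the uniform ellipticity of $\sigma^i$, the linear/sublinear growth of $h^i$ in $x$, the fact that $b^i$ is bounded in $x$ (with at most $\rho_r(|y|)$ dependence on $y$, which is \emph{bounded} since $\rho_r\equiv 0$ when $r>0$ — so in the relevant regime $b^i$ is genuinely bounded, and when $r=0$ the terminal data $g^i$ is bounded too), and the growth conditions on $f^i$ and $g^i$.

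\textbf{Step 1: a priori bound on the backward component.} First I would bound $\sup_{t}\bbE[|Y_t^{i,\mu}|^2] + \norm{Z^{i,\mu}}_{\bbH^2}^2$ in terms of $\bbE[|X_T^{i,\mu}|^{2r}]$ and $\bbE\int_0^T |X_t^{i,\mu}|^{2r}\,dt$. This is the usual BSDE energy estimate: apply \Ito's formula to $|Y^{i,\mu}_t|^2$ between $t$ and $T$, take expectations, use $Y^{i,\mu}_T = g^i(X^{i,\mu}_T,\mu_T)$ with $|g^i|\le C(1+|x|^r)$, use $|f^i(t,x,y,z,m)|\le C(1+|x|^r+|y|+|z|)$, and absorb the $|z|$ term via Young's inequality and the $|y|$ term via Gronwall. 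This gives
\[
\sup_{t\in[0,T]}\bbE[|Y^{i,\mu}_t|^2] + \bbE\int_0^T|Z^{i,\mu}_s|^2\,ds \;\lesssim\; 1 + \bbE[|X^{i,\mu}_T|^{2r}] + \bbE\int_0^T|X^{i,\mu}_s|^{2r}\,ds .
\]
Crucially the constant here is independent of $\mu$ because all the growth constants are.

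\textbf{Step 2: moment bound on the forward component.} Next I would show $\sup_t \bbE[|X^{i,\mu}_t|^2]\le C$. Apply \Ito\ to $|X^{i,\mu}_t|^2$: the drift contributes $2X^{i,\mu}_t\cdot(h^i+b^i)$ and the diffusion contributes $\tr(\sigma^i(\sigma^i)^\intercal)\le \veps d$. Using $|h^i(t,x)|\le C(1+|x|)$ (or $|h^i|\le C$ in the locally-Lipschitz-$\sigma$ case) and $|b^i|\le C(1+\rho_r(|Y^{i,\mu}_t|))$, Young's inequality gives $\frac{d}{dt}\bbE[|X^{i,\mu}_t|^2]\lesssim 1 + \bbE[|X^{i,\mu}_t|^2] + \bbE[\rho_r(|Y^{i,\mu}_t|)^2]$. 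In the regime $r>0$ the last term vanishes and Gronwall closes the estimate immediately, uniformly in $\mu$. In the regime $r=0$, $g^i$ and the $|x|^r$-terms in $f^i$ are bounded constants, so Step 1 already gives $\sup_t\bbE[|Y^{i,\mu}_t|^2]\le C$ outright, hence $\bbE[\rho_0(|Y^{i,\mu}_t|)^2]$ need not be small but is controlled — one handles this by a bootstrap/localization or simply by noting $\rho_0$ may be unbounded yet $b^i$ still has the stated bound, so one uses $|b^i|\le C(1+\rho_0(|Y|))$ and Cauchy--Schwarz to get a term $\bbE[|X_t|^2]^{1/2}(\bbE[\rho_0(|Y_t|)^2])^{1/2}$, then Gronwall. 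Either way $\sup_t\bbE[|X^{i,\mu}_t|^2]\le C$ with $C$ independent of $\mu$, and feeding this back into Step 1 closes both moment estimates.

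\textbf{Step 3: the Hölder-in-time modulus.} For the second inequality, fix $s<t$ and use that $W_1(\scL(X^{i,\mu}_t),\scL(X^{i,\mu}_s))\le \bbE|X^{i,\mu}_t - X^{i,\mu}_s|$ (couple through the same random variable). From the SDE,
\[
\bbE|X^{i,\mu}_t - X^{i,\mu}_s| \;\le\; \bbE\int_s^t|h^i+b^i|\,du \;+\; \bbE\Bigl|\int_s^t \sigma^i\,dW^i_u\Bigr|.
\]
The drift term is $\lesssim |t-s|\,(1 + \sup_u\bbE|X^{i,\mu}_u| + \sup_u\bbE[\rho_r(|Y^{i,\mu}_u|)])\lesssim |t-s|$ by Step 2 (and in the $r=0$ case, a Cauchy--Schwarz on $\rho_0(|Y|)$). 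The stochastic term is $\le (\bbE\int_s^t|\sigma^i|^2\,du)^{1/2}\le (\veps d\,|t-s|)^{1/2}$ by It\^o isometry and ellipticity. Summing over $i\in\scO$ gives $\scK(\psi_t(\mu),\psi_s(\mu))\le C|t-s|^{1/2}$ for $|t-s|\le 1$, hence for all $s,t$ after adjusting $C$; continuity of $t\mapsto\psi_t(\mu)$ in $\scP^H_1$ follows, and each $\psi_t(\mu)\in\scP^H_1(\bbR^m)$ because of the finite first moment from Step 2. \textbf{The main obstacle} is the interplay in Step 2 between the non-boundedness of $b^i$ (through $\rho_r(|y|)$) and the growth of $f^i,g^i$ in $|x|^r$: one must be careful that the two a priori estimates are not circular, which is why the dichotomy $r>0$ (forcing $\rho_r\equiv0$, so $b$ bounded) versus $r=0$ (forcing $g$ bounded) in Assumption \ref{existence assumption} is exactly what decouples the estimates and keeps every constant independent of $\mu$.
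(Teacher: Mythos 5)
Your Step 3 is essentially the paper's argument (Kantorovich--Rubinstein duality, then H\"older and Burkholder--Davis--Gundy to get the $|t-s|^{1/2}$ modulus), and the $r>0$ branch of Step 2 is correct since $\rho_r\equiv 0$ makes $b^i$ bounded outright. The gap is in the $r=0$ branch of Step 2. There $\rho_0$ is only assumed nondecreasing --- Assumption \ref{existence assumption} forces $\rho_r\equiv 0$ only when $r>0$ --- so $\rho_0$ may grow arbitrarily fast (e.g.\ $\rho_0(y)=e^{y^2}$). Consequently the $L^2$ energy estimate of your Step 1, $\sup_t\bbE[|Y^{i,\mu}_t|^2]\le C$, gives no control on $\bbE[\rho_0(|Y^{i,\mu}_t|)^2]$, and the Cauchy--Schwarz term $\brak{\bbE[|X_t|^2]}^{1/2}\brak{\bbE[\rho_0(|Y_t|)^2]}^{1/2}$ that you propose to feed into Gronwall may simply be infinite; the phrase ``need not be small but is controlled'' is not justified by anything you have established.

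What is actually needed, and what the paper uses, is an almost-sure uniform bound $\|Y^{i,\mu}\|_\infty\le M$ with $M$ depending only on the constant $C$ of Assumption \ref{existence assumption} and on $T$: when $r=0$ the terminal datum $g^i$ is bounded and $f^i$ has linear growth in $(y,z)$ with a bounded free term, so the standard $L^\infty$ estimate for such BSDEs applies (the paper invokes the argument of Proposition 4.7 of \cite{nam2022coupled}). Then $\rho_0(|Y^{i,\mu}_t|)\le\rho_0(M)$ is a deterministic constant and $b^i$ is genuinely bounded uniformly in $\mu$, exactly as in the $r>0$ case; your Step 1 then becomes unnecessary for this lemma. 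With that replacement, the dichotomy you correctly identify in your closing paragraph really does decouple the forward and backward estimates, and the remainder of your argument (Gronwall for the forward moments, then the H\"older modulus) goes through.
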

	\begin{proof}
		First, we show that $\psi_t(\mu)=\scL^H(X^\mu_t)\in\scP_1^H(\bbR^m)$. By Theorem IV.2.5 of \cite{protter2005}, we have
		\begin{align*} \sup_{t\in[0,T]}&\bbE[|X^{i,\mu}_t|^2]\leq\bbE[\sup_{t\in[0,T]}|X^{i,\mu}_t|^2]\\
			\lesssim& \int_0^T{\rm tr}\brak{\sigma^{i}(\sigma^{i})^\intercal}(t,X_t^{i,\mu})dt+\bbE\abs{\int_0^T|h^i(t,X_t^{i,\mu})+b^i(t,X_t^{i,\mu},Y_t^{i,\mu},Z_t^{i,\mu},\mu_t)|dt}^2.
		\end{align*}
		Since the uniform ellipticity of $(\sigma\sigma^\intercal)$ implies that ${\rm tr}\brak{\sigma^{i}(\sigma^{i})^\intercal}(t, X_t^{i,\mu})$ is bounded, we only need to focus on the second term.
		
		Assume first that $r=0$. Then, since $Y^{i,\mu}$ is the unique solution of backward stochastic differential equation (BSDE) in \eqref{fbsde_given_mu}, a similar argument to Proposition $4.7$ of \cite{nam2022coupled} tells us that $Y^{i,\mu}$ is uniformly bounded by a constant depending only on $C$ in the Assumption \ref{existence assumption} and $T$. Therefore, $b^i(t,X_t^{i,\mu},Y_t^{i,\mu},Z_t^{i,\mu},\mu_t)$ is bounded by a constant independent of $\mu$. Otherwise, when $r>0$, the assumption $\rho_r\equiv 0$ ensures that $b^i(t,X_t^{i,\mu},Y_t^{i,\mu},Z_t^{i,\mu},\mu_t)$ is bounded by $C$. In either case, $b^i(t,X_t^{i,\mu},Y_t^{i,\mu},Z_t^{i,\mu},\mu_t)$ is uniformly bounded. 
		
		Next, if $\sigma$ is a constant, the drift of $X^{i,\mu}$ has a linear growth with respect to $X^{i,\mu}$. Then, applying the Gr\"{o}nwall inequality and Doob's maximum inequality, it follows that $$\sup_{i\in\scO}\norm{X^{i,\mu}}_{\bbH^2(\bbR^m)}$$ is bounded by a constant independent of $\mu$. Otherwise, $h^i$ is uniformly bounded.
		In either case, $\bbE\abs{\int_0^T|h^i(t,X_t^{i,\mu})|}^2$ is also bounded independently of $\mu$.
		This proves the first claim.
		
		Now, we prove that $\psi(\mu)$ is continuous in time. 
		Note that 
		\begin{align*}
			\scK(&\psi_t(\mu),\psi_s(\mu))=\sum_{i=1}^H\sup_{\phi\in Lip_1}\abs{\int_{\bbR^m}\phi(x)\:[\psi^i_t(\mu)-\psi^i_s(\mu)](dx)}\\
			&\lesssim \sup_{i\in\scO}\sup_{\phi\in Lip_1}\abs{\bbE\phi( X^{i,\mu}_t)-\bbE\phi( X^{i,\mu}_s)}\\
			&\lesssim \sup_{i\in\scO}\bbE\abs{X^{i,\mu}_t- X^{i,\mu}_s}\\
			&\lesssim \sup_{i\in\scO}\bbE\abs{\int_s^t \brak{h^i(u, X^{i,\mu}_u)+b^i(u,X_u^{i,\mu},Y_u^{i,\mu},Z_u^{i,\mu},\mu_u)}du+\int_s^t\sigma^i(u,X_u^{i,\mu})dW^{i}_u}.
		\end{align*}
		By the H\"older inequality and the Burkeholder-Davis-Gundy inequality, we have
		\begin{align*}
			\bbE&\left|\int_s^t \left(h^i(u, X^{i,\mu}_u)+b^i(u,X_u^{i,\mu},Y_u^{i,\mu},Z_u^{i,\mu},\mu_u)\right)du\right|\\
			&\lesssim |t-s|^{1/2}\brak{\bbE\int_s^t \abs{h^i(u, X^{i,\mu}_u)+b^i(u,X_u^{i,\mu},Y_u^{i,\mu},Z_u^{i,\mu},\mu_u)}^2du}^{1/2}\\
			&\lesssim |t-s|^{1/2}\brak{\bbE\int_s^t(1+|X^{i,\mu}_u|^2)du}^{1/2}\\
			&\lesssim |t-s|^{1/2}\\
			\bbE&\abs{\int_s^t\sigma^i(u,X_u^{i,\mu})dW^{i}_u}\lesssim \brak{\bbE\int_s^t(\sigma^i(\sigma^{i})^\intercal)(u,X_u^{i,\mu})du}^{1/2}\lesssim |t-s|^{1/2}.
		\end{align*}
		Therefore, there exists a constant $C$, which is independent of the choice of $\mu$, such that
		\[
		\scK(\psi_t(\mu),\psi_s(\mu))\leq  C|t-s|^{1/2},
		\]
		and it proves our claim.
	\end{proof}

	Now we consider the space
	\begin{align*}
		E:=\left\{\mu\in\scC\brak{[0,T];\scP^H_1(\bbR^m)}:\scK(\mu_t,\mu_s)\leq K|t-s|^{\frac{1}{2}}\right\},
	\end{align*}
	for a large enough constant $K$, endowed with a distance
	\[
	\rho(\mu,\nu) :=\sup_{t\in[0,T]}\scK(\mu_t,\nu_t).
	\]
As mentioned in the introduction, we establish the existence of a fixed point of  $\psi$  on  $E$  via Schauder’s fixed point theorem, stated in Lemma \ref{fixed_point_lemma}, thereby proving the solvability of the FBSDE \eqref{fbsde}. We first verify that  $E$  is a complete metric space.
	\begin{Remark}
		Space $(E,\rho)$ is a complete metric space.
	\end{Remark}
	\begin{proof}
		Since $(\bbR^m,|\cdot|)$ is a separable and complete metric space, it follows from \cite{bolley2008separability} that $\brak{\scP_1(\bbR^m),W_1}$ is also separable and complete. Consequently, the product space $\brak{\scP_1^H(\bbR^m),\scK}$ is a complete metric space. Now we consider a Cauchy sequence $(y^n)_n$ in $E=\left\{\mu\in\scC\brak{[0,T];\scP^H_1(\bbR^m)}:\scK(\mu_t,\mu_s)\leq C|t-s|^{\frac{1}{2}}\right\}$. Then, for any $\varepsilon>0$ there exists an $N$ such that when $m,n>N$,
		\begin{align*}
			\scK(y^n_t,y^m_t)\leq\sup_{s\in[0,T]}\scK(y^n_s,y^m_s)= \rho(y^n,y^m)<\varepsilon.
		\end{align*}
		Thus, for any $t\in[0,T]$, the sequence $(y_t^n)_n$ is Cauchy in a complete space $\scP^H_1(\bbR^m)$ and converges. We define
		\[
		y_t^*:=\lim_{n\rightarrow{\infty}} y^n_t
		\]
		for each $t\in[0,T]$. Let us verify that $y^*\in E$. For $t,s\in[0,T]$ and $k\in\bbN$, we have
        \begin{small}
		\begin{align*}
			\scK(y^*_s,y^*_t)\leq \scK\brak{y^*_s,y^k_s}+\scK\brak{y^k_s,y^k_t}+\scK\brak{y^*_t,y^k_t}\leq C|t-s|^{1/2}+\scK\brak{y^*_s,y^k_s}+\scK\brak{y^*_t,y^k_t}.
		\end{align*}
        \end{small}
		By taking $k\to\infty$, we deduce $\scK(y^*_t,y^*_s)\leq C|t-s|^{\frac{1}{2}}$, and therefore, we conclude $(E,\rho)$ is a complete metric space.
	\end{proof}

	Let
	$\Tilde{b}^i(t,x,y,z,m)=(\sigma^i)^\intercal(\sigma^i(\sigma^i)^\intercal)^{-1}(t,x)b^i(t,x,y,z,m)$ and consider the following \emph{decoupled} FBSDE under probability measure $\bbP$:
	\begin{equation}\label{decoupled fbsde}
		\begin{aligned}
		d\hat{X}^{i}_t&=h^i(t,\hat{X}_t^{i})dt+\sigma^i(t,\hat{X}_t^{i})dW^{i}_t\\
			d\hat{Y}^{i,\mu}_t&=-\brak{f^i(t,\hat{X}^i_t,\hat{Y}_t^{i,\mu},\hat{Z}_t^{i,\mu},\mu_t)+\hat{Z}^{i,\mu}_t\Tilde{b}^i(t,\hat{X}^i_t,\hat{Y}_t^{i,\mu},\hat{Z}_t^{i,\mu},\mu_t)}dt+\hat{Z}^{i,\mu}_tdW^i_t\\
			\hat{X}_0^{i,\mu}&=x^i;\ \hat{Y}^{i,\mu}_T=g^i\left(\hat{X}^i_T,\mu_T\right).
		\end{aligned}
	\end{equation}
	\begin{Remark}
		\label{weak solution}
For simplicity, define  
\begin{align*}	
	\mathcal{E}^i(\mu) &:= \mathcal{E} \left( \int_0^\cdot  (\sigma^i)^\intercal (\sigma^i (\sigma^i)^\intercal)^{-1} (s, \hat{X}^{i}_s) {b}^i(s, \hat{X}^{i}_s, \hat{Y}_s^{i,\mu}, \hat{Z}^{i,\mu}_s, \mu_s) dW^i_s \right), \\
	W^{i,\mu}_t &:= W^i_t - \int_0^t (\sigma^i)^\intercal (\sigma^i (\sigma^i)^\intercal)^{-1} (s, \hat{X}^{i}_s) b^i(s, \hat{X}^{i}_s, \hat{Y}^{i,\mu}_s, \hat{Z}^{i,\mu}_s, \mu_s) ds.
\end{align*}
Define the joint probability measure as the product  
\[
d\mathbb{P}^{\mu} := d\mathbb{P}^{1,\mu} \otimes \cdots \otimes d\mathbb{P}^{H,\mu}, \quad \text{where} \quad d\mathbb{P}^{i,\mu} := \mathcal{E}^i_T(\mu) d\mathbb{P}.
\]
By the uniform ellipticity of \( \sigma \) and the uniform boundedness of \( b \), the Girsanov theorem ensures the validity of this measure change. Consequently, the tuple  
\[
(\hat{X}^{i}, \hat{Y}^{i,\mu}, \hat{Z}^{i,\mu}, W^{i,\mu}, \mathbb{P}^{\mu})
\]
forms a weak solution to the FBSDE \eqref{fbsde_given_mu}. In addition, one can check that the forward stochastic differential equation (SDE) of \eqref{fbsde_given_mu} satisfies the uniqueness in law (see Lemma \ref{measure change lemma} with \cite{gyongy2001stochastic} and \cite{le1984one}), we have
$$\scL^H(X^\mu_t)=\brak{\bbP\circ (X^{i,\mu}_t)^{-1}}_{i\in\scO}=\brak{\bbP^\mu\circ (\hat X^{i}_t)^{-1}}_{i\in\scO}.$$

	\end{Remark}
	From Proposition \ref{unique decoupled fbsde} to Remark \ref{non-lipschitz}, we establish key properties of the FBSDE \eqref{decoupled fbsde}, which will be useful in subsequent analysis.
	\begin{proposition}
		\label{unique decoupled fbsde}
		For any given $\mu\in \scC\brak{[0,T];\scP^H_1(\bbR^m)}$,  the decoupled FBSDE \eqref{decoupled fbsde} has a unique strong solution.
	\end{proposition}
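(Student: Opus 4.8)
Fix $i\in\scO$: since the $W^i$ are independent and $\mu$ is frozen, \eqref{decoupled fbsde} splits across $i$, and its forward equation $d\hat X^i_t=h^i(t,\hat X^i_t)\,dt+\sigma^i(t,\hat X^i_t)\,dW^i_t$, $\hat X^i_0=x^i$, involves neither $\hat Y$ nor $\hat Z$, so I would first settle its strong wellposedness. The two-sided bound in Assumption \ref{existence assumption} makes $(\sigma^i(\sigma^i)^\intercal)(t,x)$ simultaneously uniformly elliptic and uniformly bounded, so $\sigma^i$ is bounded, $(\sigma^i(\sigma^i)^\intercal)^{-1}$ exists, and $|(\sigma^i)^\intercal(\sigma^i(\sigma^i)^\intercal)^{-1}|\le C$. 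Under either alternative of Assumption \ref{existence assumption} (locally Lipschitz $\sigma^i$ with bounded $h^i$, or constant $\sigma^i$ with $|h^i|\le C(1+|x|)$, the latter after a standard localization in $|\hat X^i|$), the forward SDE then has a unique strong solution by the theory of SDEs with merely measurable drift and non-degenerate diffusion --- precisely the verification of hypotheses (H1) and (H4) of Appendix \ref{measure change} carried out via \cite{gyongy2001stochastic,menoukeu2019flows} in the proof of Lemma \ref{fbsde_ext_given_mu}. Boundedness of $\sigma^i$ and linear growth of $h^i$ then give, by the Burkholder--Davis--Gundy and Gr\"onwall inequalities, $\sup_{t\in[0,T]}\bbE[|\hat X^i_t|^p]<\infty$ for all $p\ge1$, hence $g^i(\hat X^i_T,\mu_T)\in L^2$.

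With $\hat X^i$ fixed --- and generating the same augmented filtration as $W^i$ --- what remains is the BSDE for $(\hat Y^{i,\mu},\hat Z^{i,\mu})$ with terminal value $g^i(\hat X^i_T,\mu_T)$ and generator $F^i(t,\omega,y,z):=f^i(t,\hat X^i_t,y,z,\mu_t)+z\,\tilde b^i(t,\hat X^i_t,y,z,\mu_t)$. By uniform ellipticity $|\tilde b^i|\le C(1+\rho_r(|y|))$, which is $\le C$ when $r>0$; when $r=0$ I would record an a priori $L^\infty$-bound for the $Y$-component of any solution --- the terminal value is bounded and, after a localization and a Girsanov change absorbing the $z$-linear part of $F^i$, the generator has linear growth in $y$, in the spirit of Proposition 4.7 of \cite{nam2022coupled} --- so that $\tilde b^i$ evaluated along any solution is bounded. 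The point that prevents a direct treatment is that, although $F^i$ has linear growth in $z$, the product $z\,\tilde b^i(t,x,y,z)$ is merely continuous --- neither Lipschitz nor, for unbounded $z$, uniformly continuous --- in $z$, so \eqref{decoupled fbsde} is not covered by the Pardoux--Peng or Lepeltier--San Martin frameworks, and I would not attack it directly.

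The main step is therefore to transfer wellposedness from the Markovian FBSDE \eqref{fbsde_given_mu} along the Girsanov correspondence of Remark \ref{weak solution}. By Lemma \ref{fbsde_ext_given_mu}, \eqref{fbsde_given_mu} has a unique strong solution, which is of functional form $Y^{i,\mu}_t=u^{i,\mu}(t,X^{i,\mu}_t)$, $Z^{i,\mu}_t=d^{i,\mu}(t,X^{i,\mu}_t)$ for Borel functions $u^{i,\mu},d^{i,\mu}$ (with $u^{i,\mu}$ bounded when $r=0$, by Lemma \ref{lemma_mu}), and its weak solutions are unique in law. For existence I would set $\hat Y^{i,\mu}_t:=u^{i,\mu}(t,\hat X^i_t)$, $\hat Z^{i,\mu}_t:=d^{i,\mu}(t,\hat X^i_t)$: then $b^i(\cdot,\hat X^i,\hat Y^{i,\mu},\hat Z^{i,\mu},\mu)$ is bounded, so $\scE^i_T(\mu)$ is a true martingale and, under $\bbP^{i,\mu}$, a short computation using $\sigma^i(\sigma^i)^\intercal(\sigma^i(\sigma^i)^\intercal)^{-1}=I_m$ shows that $\hat X^i$ solves the \emph{same} Borel SDE --- drift $h^i(t,x)+b^i(t,x,u^{i,\mu}(t,x),d^{i,\mu}(t,x),\mu_t)$, diffusion $\sigma^i$, driving Brownian motion $W^{i,\mu}$ --- that $X^{i,\mu}$ solves under $\bbP$. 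Uniqueness in law of this forward SDE, together with the fact that, $\sigma^i$ being invertible, both $W^{i,\mu}$ and the stochastic integral in the BSDE relation are measurable functionals of the path of $\hat X^i$, then forces $(\hat X^i,\hat Y^{i,\mu},\hat Z^{i,\mu})$ to solve \eqref{decoupled fbsde} under $\bbP$; this is exactly the content of Lemma \ref{measure change lemma}. For uniqueness, any solution $(\hat X^i,\bar Y^i,\bar Z^i)$ of \eqref{decoupled fbsde} (the forward part already being unique) admits, thanks to the $L^\infty$-bound, the analogous legitimate Girsanov change with integrand $(\sigma^i)^\intercal(\sigma^i(\sigma^i)^\intercal)^{-1}b^i(\cdot,\hat X^i,\bar Y^i,\bar Z^i,\mu)$, producing a weak solution of \eqref{fbsde_given_mu}; uniqueness in law then yields $\bar Y^i_t=u^{i,\mu}(t,\hat X^i_t)$ and $\bar Z^i_t=d^{i,\mu}(t,\hat X^i_t)$ $\bbP$-a.s., independently of the chosen solution.

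The hard part is this transfer: the difficulty is not in any estimate but in the fact that the decoupling fields $u^{i,\mu},d^{i,\mu}$ are merely Borel, so both the identification of $(\hat Y^{i,\mu},\hat Z^{i,\mu})$ as a genuine strong solution and its analogue in the uniqueness argument must be routed through Lemma \ref{measure change lemma} --- through uniqueness in law of the forward SDE and the attendant Gy\"ongy/Krylov-type estimates (\cite{gyongy2001stochastic,le1984one}) --- rather than through pathwise \Ito\ calculus. The measurability of $h^i$ in the forward SDE, and the $n=1$ uniformly-continuous alternative for $f^i$ (treated as in Lemma \ref{fbsde_ext_given_mu} via \cite{hamadene2003multidimensional,fan2010uniqueness,hamadene1997bsdes}), are secondary matters absorbed into these same cited results.
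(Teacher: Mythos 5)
Your argument follows the paper's proof in all essentials: unique strong solvability of the forward SDE under either alternative of Assumption \ref{existence assumption}, the Borel decoupling fields $u^{i,\mu},d^{i,\mu}$ coming from the Hamad\`ene-type Markovian representation, and the Girsanov correspondence with the coupled FBSDE \eqref{fbsde_given_mu} together with Lemma \ref{fbsde_ext_given_mu} and the Appendix machinery of Lemma \ref{measure change lemma} to transfer existence and uniqueness. The only cosmetic difference is that the paper solves the decoupled backward equation directly via the continuous-generator/$L^2$-domination results of \cite{hamadene1997bsdes} (its driver $f+z\tilde b$ being merely continuous with linear growth in $z$, as you correctly note), whereas you construct the solution by composing the coupled system's decoupling field with $\hat X^i$ and pulling it back through the measure change; both variants rest on the same cited results and yield the same conclusion.
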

	\begin{proof}
By solving the forward equation and the backward equation \eqref{decoupled fbsde} separately, we establish the existence of a solution. Let 
\[
\brak{\hat{X}^{i},\hat{Y}^{i,\mu},\hat{Z}^{i,\mu}} \quad \text{and} \quad \brak{\hat{X}^{'i},\hat{Y}^{'i,\mu},\hat{Z}^{'i,\mu}}
\]
be two solutions of \eqref{decoupled fbsde}. Assumption \ref{existence assumption} ensures that the forward SDE in \eqref{decoupled fbsde} is uniquely solvable, implying that \( \hat{X}^{i} \) and \( \hat{X}^{'i} \) are indistinguishable.

Next, \cite{hamadene1997bsdes} tells us that there exist Borel measurable functions 
		\begin{align*}
			\brak{u^{i,\mu},u^{'i,\mu},d^{i,\mu},d^{'i,\mu}}:[0,T]\times\bbR^m\rightarrow\bbR^{n+n+n\times d+n\times d}
		\end{align*} such that
		\begin{align*}
			\brak{\hat{Y}^{i,\mu}_t,\hat{Z}^{i,\mu}_t}=\brak{u^{i,\mu}(t,\hat{X}^{i}_t),d^{i,\mu}(t,\hat{X}^{i}_t)},\brak{\hat{Y}^{'i,\mu}_t,\hat{Z}^{'i,\mu}_t}=\brak{u^{'i,\mu}(t,\hat{X}^{i}_t),d^{'i,\mu}(t,\hat{X}^{i}_t)},
		\end{align*}
		representing two weak solutions of FBSDE \eqref{fbsde_given_mu} under different probability measures. 

Since the forward SDE in \eqref{fbsde_given_mu}, after substituting \( (\hat{Y}^{i,\mu}, \hat{Z}^{i,\mu}) \) and \( (\hat{Y}^{'i,\mu}_t, \hat{Z}^{'i,\mu}_t) \) with the functions \( u^{i,\mu}, d^{i,\mu}, u^{'i,\mu}, d^{'i,\mu} \) of $\hat X^{i}$, has a unique strong solution by Assumption \ref{existence assumption}, the processes \( (\hat{Y}^{i,\mu}_t, \hat{Z}^{i,\mu}_t) \) and \( (\hat{Y}^{'i,\mu}_t, \hat{Z}^{'i,\mu}_t) \) must be strong solutions of \eqref{fbsde_given_mu} as they are functions of \( \hat{X}^{i}\) and adapted to the filtration of the measure-changed Brownian motion.

Applying Lemma \ref{fbsde_ext_given_mu}, we conclude that  
\[
\brak{\hat{Y}^{i,\mu}_t,\hat{Z}^{i,\mu}_t}=\brak{\hat{Y}^{'i,\mu}_t,\hat{Z}^{'i,\mu}_t},
\]
proving uniqueness.
	\end{proof}
The $L^2$-domination condition, originally introduced in \cite{hamadene1997bsdes}, is presented below for readers' convenience.
	\begin{definition}\label{def:L2dom}
		Consider a class of SDEs
		\begin{align}\label{L2SDE}
			d\hat{X}^{(t,x)}_s=h(s, \hat{X}^{(t,x)}_s)ds+\sigma(s,\hat{X}^{(t,x)}_s)dW_s; \qquad \hat{X}^{(t,x)}_t=x\in\bbR^m
		\end{align}
		defined on $[t,T]$. We say that the coefficients $(h,\sigma)$ satisfy the $L^2$-domination condition if the following conditions are satisfied:
		\begin{itemize}
			\item For each $(t,x)\in[0,T]\times\bbR^m$, the SDE \eqref{L2SDE} has a unique strong solution $\hat{X}^{(t,x)}$. We denote $\mu^{(t,x)}_s$ as the law of $\hat{X}^{(t,x)}_s$, that is, $\mu^{(t,x)}_s:=\bbP\circ (\hat{X}^{(t,x)}_s)^{-1}$. 
			\item For any $t\in[0,T], a\in\bbR^m, \mu^{(0,a)}_t$-almost every $x\in\bbR^m$, and $\delta\in(0,T-t]$, there exists a function $\phi_t:[t,T]\times\bbR^m\to\bbR_+$ such that
			\begin{itemize}
				\item for all $k\geq 1$, $\phi_t\in L^2([t+\delta,T]\times[-k,k]^m;\mu^{(0,a)}_s(d\xi)ds)$
				\item $\mu^{(t,x)}_s(d\xi)ds=\phi_t(s,\xi)\mu^{(0,a)}_s(d\xi)ds$.
			\end{itemize}
		\end{itemize}
	\end{definition}
	\begin{proposition}
 \label{l2 condition}
		For each $i\in\scO$ and $1\leq r<\infty$, the coefficients $(h^i,\sigma^i)$ satisfy the $L^2$-domination condition and $\bbE|\hat{X}^i_t|^{2r}$ is uniformly bounded for $t\in[0,T]$. 
	\end{proposition}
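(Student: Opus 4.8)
The plan is to split the statement into two independent assertions and treat them in turn, since the $L^2$-domination condition and the moment bound require different tools.

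\textbf{Moment bound.} First I would establish that $\bbE|\hat{X}^i_t|^{2r}$ is bounded uniformly in $t\in[0,T]$. Recall from \eqref{decoupled fbsde} that $\hat{X}^i$ solves the \emph{decoupled} forward SDE $d\hat X^i_t = h^i(t,\hat X^i_t)\,dt + \sigma^i(t,\hat X^i_t)\,dW^i_t$ with $\hat X^i_0 = x^i$. Under Assumption \ref{existence assumption}, $\sigma^i$ has bounded trace (uniform ellipticity gives the two-sided bound, so in particular $\tr(\sigma^i(\sigma^i)^\intercal)\le \veps m$), and $h^i$ is either uniformly bounded or of linear growth in $x$. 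In the bounded case the SDE has bounded drift and bounded diffusion, so applying \Ito's formula to $|\hat X^i_t|^{2r}$, taking expectations, using that the stochastic integral is a true martingale (localize if needed), and invoking Gr\"onwall's inequality immediately yields $\sup_{t\in[0,T]}\bbE|\hat X^i_t|^{2r}\le C_{r,T,x^i}$. In the linear-growth case one gets, after the same \Ito\ computation, a bound of the form $\frac{d}{dt}\bbE|\hat X^i_t|^{2r}\lesssim 1+\bbE|\hat X^i_t|^{2r}$, and Gr\"onwall again closes the estimate. This step is routine and essentially classical; it is also implicitly used already in Lemma \ref{lemma_mu} for the case $r$-free, so I would just cite the same references (e.g.\ Theorem IV.2.5 of \cite{protter2005}) and adapt the power.

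\textbf{$L^2$-domination.} The substantive part is verifying the $L^2$-domination condition of Definition \ref{def:L2dom} for $(h^i,\sigma^i)$. The first bullet — existence and uniqueness of a strong solution $\hat X^{(t,x)}$ to the SDE started from $x$ at time $t$ — is guaranteed by Assumption \ref{existence assumption} (either locally Lipschitz $\sigma^i$ with bounded $h^i$, or constant $\sigma^i$ with linear-growth $h^i$), and in the latter case one may also appeal to the results of \cite{gyongy2001stochastic} and \cite{menoukeu2019flows} already invoked in the proof of Lemma \ref{fbsde_ext_given_mu}. For the second bullet I need, for each fixed $t$, a density $\phi_t(s,\xi)$ of the law $\mu^{(t,x)}_s$ with respect to the reference law $\mu^{(0,a)}_s$ that lies in $L^2$ on compacts away from the initial time. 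The key tool here is Gaussian-type two-sided bounds for the transition density $p^i(t,x;s,\xi)$ of the elliptic SDE: under uniform ellipticity and the mild regularity of the coefficients assumed (bounded measurable $h^i$, locally Lipschitz or constant $\sigma^i$), Aronson-type estimates give
\[
\frac{1}{C}\,(s-t)^{-m/2}\exp\!\brak{-C\frac{|\xi-x|^2}{s-t}}\le p^i(t,x;s,\xi)\le C\,(s-t)^{-m/2}\exp\!\brak{-\frac{1}{C}\frac{|\xi-x|^2}{s-t}}.
\]
Then $\phi_t(s,\xi) = p^i(t,x;s,\xi)/p^i(0,a;s,\xi)$ is well-defined, and on the region $s\in[t+\delta,T]$, $\xi\in[-k,k]^m$ the numerator is bounded above and the denominator is bounded below (both by constants depending on $\delta,k,T,a,x$), so $\phi_t$ is bounded there, hence in $L^2([t+\delta,T]\times[-k,k]^m;\mu^{(0,a)}_s(d\xi)\,ds)$ since that measure is finite on compacts. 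The absolute-continuity relation $\mu^{(t,x)}_s(d\xi)\,ds=\phi_t(s,\xi)\mu^{(0,a)}_s(d\xi)\,ds$ is then immediate from the density representation. I expect this Gaussian-bounds step to be the main obstacle, because the standard Aronson estimates are usually stated for divergence-form operators or for SDEs with continuous coefficients, whereas here $h^i$ is only measurable; the resolution is to observe that for the $L^2$-domination condition one only needs the forward SDE, $h^i$ enters merely as a bounded drift perturbation, and one can remove it by a Girsanov change of measure (exactly the kind of argument already used in Remark \ref{weak solution}), reducing to the driftless (or pure-$\sigma$) case whose density bounds are classical — the Radon-Nikodym factor from Girsanov, being $L^p$ for all $p$ by Novikov under boundedness of $\tilde b^i$, does not destroy the $L^2$ membership after a further Cauchy-Schwarz. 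Once the two pieces are in hand, the proposition follows by combining them, and I would close by noting the constants are uniform in $i$ since all bounds in Assumption \ref{existence assumption} are.
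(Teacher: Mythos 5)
Your proposal is correct and follows essentially the same route as the paper, which simply delegates both claims to Proposition 4.5 of \cite{nam2022coupled}: that proposition establishes the $L^2$-domination via exactly the Gaussian/Aronson-type transition-density bounds you describe, and the moment bound via It\^o's formula plus Gr\"onwall. One minor imprecision to fix: in the constant-$\sigma$, linear-growth-$h^i$ case the drift you remove by Girsanov is $h^i$ itself (not the bounded $\tilde b^i$), so Novikov's criterion should be replaced by a Bene\v{s}-type condition there, but this does not affect the overall argument.
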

	\begin{proof}
		Proposition 4.5 in \cite{nam2022coupled} shows that the $L^2$ domination condition is satisfied. The uniform bound of $\bbE|\hat{X}^i_t|^{2r}$ can be found using the Gr\"onwall's inequality similar to the same proposition.
	\end{proof}
	\begin{lemma}
		\label{bsde continuity}
		Let $(\mu^n)_{n=1,2,...}$ be a sequence in $E$ converging to $\mu\in E$ with respect to the metric $\rho$.
		Then, the solutions $\brak{\hat{X}^i,\hat{Y}^{i,\mu^n},\hat{Z}^{i,\mu^n}}$ and $\brak{\hat{X}^i,\hat{Y}^{i,\mu},\hat{Z}^{i,\mu}}$ of \eqref{fbsde_ext_given_mu} for given $\mu^n$s and $\mu$, respectively, satisfy,
		\begin{align*}
			\bbE\left|\hat{Y}_t^{i,\mu^n}-\hat{Y}_t^{i,\mu}\right|^2\xrightarrow{n\rightarrow\infty}0,\quad \bbE\int_t^T\abs{\hat{Z}^{i,\mu^n}_s-\hat{Z}^{i,\mu}_s}^2ds\xrightarrow{n\rightarrow\infty}0
		\end{align*}
		for each $t\in[0,T]$.
	\end{lemma}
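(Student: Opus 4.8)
The plan is to view \eqref{decoupled fbsde} as a BSDE driven by the $\mu$-independent forward process $\hat{X}^i$, with terminal value $g^i(\hat{X}^i_T,\mu_T)$ and driver
\[
F^{i,\mu}(s,y,z):=f^i(s,\hat{X}^i_s,y,z,\mu_s)+z\,\tilde{b}^i(s,\hat{X}^i_s,y,z,\mu_s),
\]
which depends on $\mu$ only through the slots $\mu_s,\mu_T$, and then to run a BSDE stability estimate for the pair $(\Delta Y^n,\Delta Z^n):=(\hat{Y}^{i,\mu^n}-\hat{Y}^{i,\mu},\hat{Z}^{i,\mu^n}-\hat{Z}^{i,\mu})$. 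First I would collect the ingredients that bypass the nonlinearity. Exactly as in the proof of Lemma \ref{lemma_mu}, the linear growth of $f^i$, the bound $|\tilde{b}^i|\le C$, and $|g^i(\hat{X}^i_T,\mu^n_T)|\le C(1+|\hat{X}^i_T|^r)$ together with Proposition \ref{l2 condition} give uniform estimates $\sup_n\bigl(\norm{\hat{Y}^{i,\mu^n}}_{\bbH^2}+\norm{\hat{Z}^{i,\mu^n}}_{\bbH^2}\bigr)<\infty$ (and a uniform sup-norm bound on $\hat{Y}^{i,\mu^n}$ when $r=0$). Since $\mu^n_T\to\mu_T$ in $W_1$ and $g^i(x,\cdot)$ is continuous, dominated convergence (with dominating function $C(1+|\hat{X}^i_T|^r)\in\bbH^2$, cf.\ Proposition \ref{l2 condition}) gives $g^i(\hat{X}^i_T,\mu^n_T)\to g^i(\hat{X}^i_T,\mu_T)$ in $L^2(\bbP)$; the same reasoning shows that the frozen-coefficient error $\delta^n_s:=F^{i,\mu^n}(s,\hat{Y}^{i,\mu}_s,\hat{Z}^{i,\mu}_s)-F^{i,\mu}(s,\hat{Y}^{i,\mu}_s,\hat{Z}^{i,\mu}_s)$ tends to $0$ in $\bbH^2$, being dominated by $C(1+|\hat{X}^i_s|^r+|\hat{Y}^{i,\mu}_s|+|\hat{Z}^{i,\mu}_s|)$ and converging to $0$ $ds\otimes d\bbP$-a.e.\ by continuity of $f^i$ and $\tilde{b}^i$ in $m$.

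Next I would apply It\^o's formula to $|\Delta Y^n_t|^2$, split the driver increment as $[F^{i,\mu^n}(s,\hat{Y}^{i,\mu^n}_s,\hat{Z}^{i,\mu^n}_s)-F^{i,\mu^n}(s,\hat{Y}^{i,\mu}_s,\hat{Z}^{i,\mu}_s)]+\delta^n_s$, and control the first bracket using the Lipschitz (resp.\ uniform-continuity) property of $f^i$ in $(y,z)$ together with the decomposition $\hat{Z}^{i,\mu^n}_s\tilde{b}^i(\cdots,\mu^n_s)-\hat{Z}^{i,\mu}_s\tilde{b}^i(\cdots,\mu^n_s)=\Delta Z^n_s\,\tilde{b}^i(\cdots,\mu^n_s)+\hat{Z}^{i,\mu}_s\gamma^n_s$, where $\gamma^n_s:=\tilde{b}^i(s,\hat{X}^i_s,\hat{Y}^{i,\mu^n}_s,\hat{Z}^{i,\mu^n}_s,\mu^n_s)-\tilde{b}^i(s,\hat{X}^i_s,\hat{Y}^{i,\mu}_s,\hat{Z}^{i,\mu}_s,\mu_s)$. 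The $O(|\Delta Z^n_s|)$ contribution of the $z\tilde{b}^i$-term is absorbed into $\tfrac12\bbE\int_0^T|\Delta Z^n_s|^2ds$ by Young's inequality, and what remains is the single term $\hat{Z}^{i,\mu}_s\gamma^n_s$. Gr\"onwall's lemma then yields, in the Lipschitz-$f^i$ case,
\[
\sup_{t\in[0,T]}\bbE|\Delta Y^n_t|^2+\bbE\int_0^T|\Delta Z^n_s|^2\,ds\ \lesssim\ \bbE|\Delta Y^n_T|^2+\bbE\int_0^T|\delta^n_s|^2\,ds+\bbE\int_0^T|\hat{Z}^{i,\mu}_s|^2|\gamma^n_s|^2\,ds .
\]

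The main obstacle is the last term: since $\tilde{b}^i$ is only continuous, not Lipschitz, in $(y,z)$, the statement $\gamma^n\to0$ is essentially the conclusion being sought, so the estimate is circular unless $(\hat{Y}^{i,\mu^n},\hat{Z}^{i,\mu^n})\to(\hat{Y}^{i,\mu},\hat{Z}^{i,\mu})$ in $ds\otimes d\bbP$-measure is known beforehand. To break the circularity I would argue along subsequences and invoke uniqueness. From any subsequence, the uniform $\bbH^2$-bounds (and the uniform $\bbH^2$-bound on the full drivers, by linear growth) let one extract a further subsequence along which $\hat{Z}^{i,\mu^{n_k}}\rightharpoonup\bar{Z}$ weakly in $\bbH^2$; combining this with the $L^2$-domination condition of Proposition \ref{l2 condition} and the Borel-measurable representations $\hat{Y}^{i,\mu^{n_k}}_\cdot=u^{i,\mu^{n_k}}(\cdot,\hat{X}^i_\cdot)$, $\hat{Z}^{i,\mu^{n_k}}_\cdot=d^{i,\mu^{n_k}}(\cdot,\hat{X}^i_\cdot)$ (cf.\ \cite{hamadene1997bsdes} and the proof of Lemma \ref{fbsde_ext_given_mu}), one upgrades the weak convergence of $\hat{Z}^{i,\mu^{n_k}}$ to strong $\bbH^2$-convergence (e.g.\ through the energy identity for the BSDE, which turns weak convergence into norm convergence) and obtains $S^2$-convergence of $\hat{Y}^{i,\mu^{n_k}}$ — enough to pass to the limit inside the nonlinear terms $f^i$ and $z\tilde{b}^i$ and identify the limit as a solution of \eqref{decoupled fbsde} for $\mu$; Proposition \ref{unique decoupled fbsde} then forces the limit to be $(\hat{Y}^{i,\mu},\hat{Z}^{i,\mu})$. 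Consequently $\gamma^{n_k}\to0$ in $ds\otimes d\bbP$-measure with $|\gamma^{n_k}|\le2C$, so $\bbE\int_0^T|\hat{Z}^{i,\mu}_s|^2|\gamma^{n_k}_s|^2ds\to0$ by dominated convergence and the displayed estimate gives convergence along $(n_k)$; since every subsequence admits such a sub-subsequence with the same limit, the whole sequence converges, which is the claim. When $n=1$ and $f^i$ is only uniformly continuous in $(y,z)$, the same conclusion instead follows from the monotone-stability and comparison arguments for one-dimensional BSDEs of \cite{hamadene1997bsdes} and \cite{fan2010uniqueness}, applied after the Girsanov reduction of Remark \ref{weak solution} so that the residual $z\tilde{b}^i$-term is again handled by the boundedness of $\tilde{b}^i$.
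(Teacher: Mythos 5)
You correctly set up the uniform a priori bounds, the $L^2$ convergence of the terminal conditions, and the frozen-coefficient error $\delta^n$, and you correctly diagnose the central obstacle: after the It\^o/Gr\"onwall estimate one is left with $\bbE\int_0^T|\hat{Z}^{i,\mu}_s|^2|\gamma^n_s|^2ds$, where $\gamma^n$ measures the change of $\tilde b^i$ (and $f^i$) along the unknown increments $(\Delta Y^n,\Delta Z^n)$, and since these coefficients are merely continuous in $(y,z)$ this term cannot be absorbed --- the estimate is circular. The problem is that your proposed way out does not actually break the circle. Extracting a weakly convergent subsequence $\hat{Z}^{i,\mu^{n_k}}\rightharpoonup\bar Z$ in $\bbH^2$ is fine, but the claimed upgrade to strong convergence ``through the energy identity'' is unjustified: the energy identity's drift term is $\bbE\int \hat Y^{n_k}_s\,\bar f(s,\hat X_s,\hat Y^{n_k}_s,\hat Z^{n_k}_s,\mu^{n_k}_s)\,ds$, which depends nonlinearly on $\hat Z^{n_k}$ through both $f^i$ and $z\tilde b^i$, so passing to the limit there already requires strong (or at least a.e.) convergence of $\hat Z^{n_k}$ --- precisely what you are trying to establish. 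Likewise, no mechanism is given for the asserted $S^2$ convergence of $\hat Y^{i,\mu^{n_k}}$; weak convergence of $\hat Z^{n_k}$ does not produce it, and without it you cannot ``pass to the limit inside the nonlinear terms'' or invoke Proposition \ref{unique decoupled fbsde} to identify the limit.

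The missing idea is to linearize in the \emph{composite} driver rather than in $(Y,Z)$. Using the Markovian representations $\hat Y^{\mu^n}_t=u^{\mu^n}(t,\hat X_t)$, $\hat Z^{\mu^n}_t=d^{\mu^n}(t,\hat X_t)$, set $F^{\mu^n}(s,x):=\bar f(s,x,u^{\mu^n}(s,x),d^{\mu^n}(s,x),\mu^n_s)$; the uniform $\bbH^2$ bounds give $\sup_n\|F^{\mu^n}\|_{L^2(\mu^{(0,a)}_s(d\xi)ds)}<\infty$, hence a weakly convergent subsequence of $(F^{\mu^n})_n$. The point is that
\[
u^{\mu^n}(t,x)=\bbE\,g(\hat X^{(t,x)}_T,\mu^n_T)+\int_{\bbR^m}\!\int_t^T F^{\mu^n}(s,\xi)\,\phi_t(s,\xi)\,\mu^{(0,a)}_s(d\xi)\,ds
\]
is a \emph{linear} functional of $F^{\mu^n}$ tested against the $L^2$-domination density $\phi_t$ of Definition \ref{def:L2dom} (after cutting off near $s=t$ and on $\{|\xi|\geq k\}$, which are small uniformly in $n$ by H\"older and Markov). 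Weak $L^2$ convergence of $F^{\mu^n}$ therefore yields that $u^{\mu^n}(t,x)$ is Cauchy pointwise for $\mu^{(0,a)}$-a.e.\ $x$, giving $\hat Y^{\mu^n}\to U^\mu$ in $L^2([0,T]\times\Omega)$ with no continuity modulus for $\bar f$ in $(y,z)$ needed. Only then does one run the It\^o computation for $\bbE|\hat Y^{\mu^n}_t-\hat Y^{\mu^m}_t|^2+\bbE\int_t^T|\hat Z^{\mu^n}_s-\hat Z^{\mu^m}_s|^2ds$, bounding the driver difference crudely by $|F^{\mu^n}|+|F^{\mu^m}|$ and H\"older so that the already-established $Y$-convergence forces $(\hat Z^{\mu^n})_n$ to be Cauchy; the limit is identified via Proposition \ref{unique decoupled fbsde} and a subsequence-of-subsequences argument. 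Without this (or an equivalent) device your argument does not close.
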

	\begin{proof}
		For simplicity, we omit the population indicator $i$ in this proof.

Define the modified driver function
    \[
    \bar{f}(t,x,y,z,m) := f(t,x,y,z,m) + z\tilde{b}(t,x,y,z,m).
    \]
    Assumption \ref{existence assumption} ensures that \( \bar{f} \) satisfies the growth condition  
    \[
    |\bar{f}(t,x,y,z,m)| \leq C (1 + |x|^r + |y| + |z|),
    \]
    allowing us to apply Lemma 26.2 of \cite{hamadene1997bsdes}. Consequently, there exists a constant \( C' \), which depends only on $C$ in Assumption \ref{existence assumption} and the initial value of $X$, such that
    \begin{align}
        \label{uniform boundedness} 
        \bbE\abs{\hat{Y}^{\mu}_t}^2 + \bbE\int_0^T \abs{\hat{Z}^{\mu}_t}^2dt 
        + \bbE\abs{\hat{Y}^{\mu^n}_t}^2 + \bbE\int_0^T \abs{\hat{Z}^{\mu^n}_t}^2dt \leq C'.
    \end{align}
        
		Moreover, from Proposition \ref{unique decoupled fbsde}, there exists a sequence of measurable functions 
            \[
    (u^{\mu^n}, d^{\mu^n}): [0,T] \times \bbR^m \to \bbR^n \times \bbR^{n \times d}
    \]
    such that
$$\hat{Y}^{\mu^n}_t=u^{\mu^n}(t,\hat{X}_t),\quad\hat{Z}^{\mu^n}_t=d^{\mu^n}(t,\hat{X}_t)$$ is the unique solution of decoupled FBSDE \eqref{decoupled fbsde} with given $\mu^n\in E$, namely the BSDE is with coefficients $\brak{g\brak{x,\mu^n_T},\Bar{f}(t,x,y,z,\mu^n_t)}$.

		We define $F^{\mu^n}(t,x)=\Bar{f}\brak{t,x,u^{\mu^n}(t,x),d^{\mu^n}(t,x),\mu^n_t}$. Then, there is a constant $C_a$, which depends on $a:=\hat{X}_0\in\bbR^m$, such that
		\begin{align*}
			\sup_n\bbE\int_0^T\abs{F^{\mu^n}(t,\hat{X}_t)}^2dt\leq C_a.
		\end{align*}
		Therefore, by extracting a subsequence (still denoted by \( n \)), there exists a limit function \( F(s,x) \) such that
    \begin{align}
        \label{L2convergence}
        F^{\mu^n} \rightharpoonup F \quad \text{weakly in } L^2\big([0,T] \times \bbR^m; \mu^{(0,a)}_s(d\xi) ds\big).
    \end{align}
		Observe that for each $t\in[0,T]$, $x\in\bbR^m$, $\delta>0$, and $n,m,k>0$,
		\begin{align*}
			&\abs{u^{\mu^n}(t,x)-u^{\mu^m}(t,x)}\\
            &\leq \bbE\abs{g(\hat{X}_T^{(t,x)},\mu^n_T)-g(\hat{X}_T^{(t,x)},\mu^m_T)}
			+\abs{\bbE\int_t^T F^{\mu^n}(s,\hat{X}^{(t,x)}_s)-F^{\mu^m}(s,\hat{X}^{(t,x)}_s)}ds\\
			&\leq \bbE\abs{g(\hat{X}_T^{(t,x)},\mu^n_T)-g(\hat{X}_T^{(t,x)},\mu^m_T)}+\bbE\int_t^{t+\delta}\abs {F^{\mu^n}(s,\hat{X}^{(t,x)}_s)-F^{\mu^m}(s,\hat{X}^{(t,x)}_s)}ds\\
			&+\abs{\bbE\int_{t+\delta}^{T}\brak{F^{\mu^n}(s,\hat{X}^{(t,x)}_s)-F^{\mu^m}(s,\hat{X}^{(t,x)}_s)}\cdot \mathbbm{1}_{\abs{\hat{X}^{(t,x)}_s}\geq k} ds}\\
			&+\abs{\bbE\int_{t+\delta}^{T}\brak{F^{\mu^n}(s,\hat{X}^{(t,x)}_s)-F^{\mu^m}(s,\hat{X}^{(t,x)}_s)}\cdot \mathbbm{1}_{\abs{\hat{X}^{(t,x)}_s}< k} ds}.
		\end{align*}
		Since $g(x,m)$ is continuous in $m$ and $\mu^n\to\mu$, the terminal condition $g(\hat{X}^{(t,x)}_T,\mu^n_T)$ forms a Cauchy sequence in $n$, implying $$\bbE\abs{g(\hat{X}_T^{(t,x)},\mu^n_T)-g(\hat{X}_T^{(t,x)},\mu^m_T)}\rightarrow 0 \text{ as }n,m\rightarrow\infty.$$ 
        Applying the H\"older inequality, we obtain
		\begin{align*}
			\bbE&\int_t^{t+\delta}\abs {F^{\mu^n}(s,\hat{X}^{(t,x)}_s)-F^{\mu^m}(s,\hat{X}^{(t,x)}_s)}ds\\
            &\leq \delta^{\frac{1}{2}}\brak{\bbE\int_0^T\abs{{F^{\mu^n}(s,\hat{X}^{(t,x)}_s)-F^{\mu^m}(s,\hat{X}^{(t,x)}_s)}}^2ds}^{\half}\leq 2\delta^{\frac{1}{2}} \sqrt{C_a}.
		\end{align*}
		By the Markov inequality, there exists a constant $C''$, depending on $\bbE|\hat{X}^{(t,x)}|$, such that
		\begin{align*}
			&\abs{\bbE\int_{t+\delta}^{T}F^{\mu^n}(s,\hat{X}^{(t,x)}_s)-F^{\mu^m}(s,\hat{X}^{(t,x)}_s)\cdot \mathbbm{1}_{\abs{\hat{X}^{(t,x)}_s}\geq k} ds}\\&\leq \brak{\bbE\int_{t+\delta}^{T}\mathbbm{1}_{\abs{\hat{X}^{(t,x)}_s}\geq k}ds}^\frac{1}{2} \brak{\bbE\int_{t+\delta}^{T}\abs {F^{\mu^n}(s,\hat{X}^{(t,x)}_s)-F^{\mu^m}(s,\hat{X}^{(t,x)}_s)}^2ds}^{\frac{1}{2}}\\
			&\leq C''\sqrt{C_a}k^{-\frac{1}{2}}.
		\end{align*}
		From the $L^2$-domination condition, there exists a function $\phi_t:[t,T]\times\bbR^m\to\bbR_+$ such that
		\begin{align*}
			\bbE&\int_{t+\delta}^{T}\brak{F^{\mu^n}(s,\hat{X}^{(t,x)}_s)-F^{\mu^m}(s,\hat{X}^{(t,x)}_s)}\cdot \mathbbm{1}_{\abs{\hat{X}^{(t,x)}_s}< k} ds\\
            &=\int_{\bbR^m}\int_{t+\delta}^{T}\brak{F^{\mu^n}(s,\xi)-F^{\mu^m}(s,\xi)}\cdot \mathbbm{1}_{\abs{\xi}<k}\lambda^{(t,x)}_s(d\xi)ds\\
			&=\int_{\bbR^m}\int_{t+\delta}^{T}\brak{F^{\mu^n}(s,\xi)-F^{\mu^m}(s,\xi)}\cdot \mathbbm{1}_{\abs{\xi}< k}\phi_t(s,\xi)\lambda^{(0,a)}_s(d\xi)ds
		\end{align*}
		where $\lambda^{(t,x)}_s$ represents the distribution of $\hat{X}^{(t,x)}_s$.
		The convergence \eqref{L2convergence} we have for $\lambda^{(0,a)}_s(d\xi)$-almost every $x\in\bbR^m$
		\begin{align*}
			\bbE\int_{t+\delta}^{T}\brak{F^{\mu^n}(s,\hat{X}^{(t,x)}_s)-F^{\mu^m}(s,\hat{X}^{(t,x)}_s)}\cdot \mathbbm{1}_{\abs{\hat{X}^{(t,x)}_s}< k} ds\rightarrow 0,\text{ as }n,m\rightarrow\infty.
		\end{align*}
		Therefore, for any $\veps>0$, one can select large enough $N\in\bbN$ so that 
		\[
		\abs{u^{\mu^n}(t,x)-u^{\mu^m}(t,x)}<\veps
		\] for all $n,m,k,\delta^{-1}\geq N$. It implies $\brak{u^{\mu^n}(t,x)}_n$ is a Cauchy sequence for all $t\in[0,T]$ and $\lambda^{(0,a)}_s(d\xi)$-almost every $x\in\bbR^m$. 
	Define
		$v^\mu(t,x):=\lim_{n\rightarrow\infty}u^{\mu^n}(t,x)$ and $U^\mu_t=v^\mu(t,\hat{X}^{(0,a)}_t)$. Note that $U^\mu$ is progressively measurable since $v^\mu$ is measurable. By the inequality \eqref{uniform boundedness} and the dominate convergence theorem, we obtain
		\[
		\lim_{n\to\infty}\bbE\int_0^T\abs{\hat{Y}^{\mu^n}_t-U^\mu_t}^2dt=\bbE\int_0^T\lim_{n\to\infty}\abs{\hat{Y}^{\mu^n}_t-U^\mu_t}^2dt=0.
		\]
		
		On the other hand, applying the It\^{o} formula, we obtain
		\begin{align*}
			\bbE&\abs{\hat{Y}^{\mu^n}_t-\hat{Y}^{\mu^m}_t}^2+\bbE\int_t^T\abs{\hat{Z}^{\mu^n}_s-\hat{Z}^{\mu^m}_s}^2ds\\
			&=\bbE\abs{g(\hat{X}^{(0,a)}_T,\mu^n_T)-g(\hat{X}^{(0,a)}_T,\mu^m_T)}^2\\
            +&2\bbE\int_t^T\brak{\hat{Y}^{\mu^n}_s-\hat{Y}^{\mu^m}_s}\brak{\Bar{f}(s,\hat{X}^{(0,a)}_s,\hat{Y}^{\mu^n}_s,\hat{Z}^{\mu^n}_s,\mu^n_s)-\Bar{f}(s,\hat{X}^{(0,a)}_s,\hat{Y}^{\mu^m}_s,\hat{Z}^{\mu^m}_s,\mu^m_s)}ds.
		\end{align*}
		The H\"{o}lder's inequality and $\sup_{n}\bbE\int_0^T\abs{F^{\mu^n}(t,\hat{X}^{(0,a)}_t)}^2dt\leq C_a$ imply
		\begin{align*}
			\bbE\int_t^T\abs{\hat{Z}^{\mu^n}_s-\hat{Z}^{\mu^m}_s}^2ds\leq \bbE&\abs{g(\hat{X}^{(0,a)}_T,\mu^n_T)-g(\hat{X}^{(0,a)}_T,\mu^m_T)}^2\\
            &+C\bbE\brak{\int_t^T\abs{\hat{Y}^{\mu^n}_s-\hat{Y}^{\mu^m}_s}^2ds}^{\frac{1}{2}}.
		\end{align*}
		Therefore, $\brak{\hat{Z}^{\mu^n}}_n$ is also a Cauchy sequence in $L^2\brak{[0,T]\times\Omega}$ and we can denote $V^\mu$ as its limit. 
        
		Now we verify that $(U^\mu, V^\mu)$ is the unique solution of FBSDE \eqref{decoupled fbsde} for a given $\mu$. Let us take $n\to\infty$ along the subsequence we chose previously to the following BSDE
		\[
		\hat{Y}^{\mu^n}_t=g\left(\hat{X}^{(0,a)}_T,\mu^n_T\right)+\int_t^T\bar f(s,\hat{X}^{(0,a)}_s,\hat{Y}_s^{\mu^n},\hat{Z}_s^{\mu^n},\mu^n_s)ds-\int_t^T\hat{Z}^{\mu^n}_sdW_s.
		\]
		The left-hand side converges to $U^\mu_t$ in $L^2(\Omega)$ for all $t\in[0,T]$. In addition, 
		\begin{align*}
			g\left(\hat{X}^{(0,a)}_T,\mu^n_T\right)&\to g\left(\hat{X}^{(0,a)}_T,\mu_T\right)\text{ for all $\omega\in\Omega$}\\
			\int_t^T\hat{Z}^{\mu^n}_sdW_s&\to \int_t^TV^{\mu}_sdW_s\text{ in } L^2(\Omega)\text{ for all }t\in[0,T].
		\end{align*}
		Lastly, by the dominated convergence theorem, we obtain
		\begin{align*}
		\int_t^T\bar f(s,\hat{X}^{(0,a)}_s,\hat{Y}_s^{\mu^n},\hat{Z}_s^{\mu^n},\mu^n_s)ds\to\int_t^T\bar f(s,\hat{X}^{(0,a)}_s,U_s^{\mu},V_s^{\mu},\mu_s)ds   
		\end{align*}
		in $L^2(\Omega)$ for all $t\in[0,T]$. Thus, \( (U^\mu, V^\mu) \) satisfies FBSDE \eqref{decoupled fbsde}. By Proposition \ref{unique decoupled fbsde}, this FBSDE admits a unique solution, implying  
\[
(U^\mu, V^\mu) = (\hat{Y}^\mu, \hat{Z}^\mu).
\]

        We conclude the proof by noting that for any sequence \( (\mu^n)_n \subset E \) converging to \( \mu \in E \), the corresponding sequence \( (\hat{Y}^{\mu^n})_n \) has the property that every subsequence admits a further subsequence converging to \( \hat{Y}^\mu \), the unique solution of FBSDE \eqref{decoupled fbsde} with given \( \mu \).  
This implies that \( (\hat{Y}^{\mu^n})_n \) converges to \( \hat{Y}^\mu \) in \( L^2([0,T] \times \Omega) \) whenever \( (\mu^n)_n \) converges to \( \mu \) in \( (E, \rho) \).
	\end{proof}
	\begin{Remark}\label{non-lipschitz}
        In the case where the decoupled FBSDE \eqref{decoupled fbsde} is one-dimensional (\( n=1 \)) and \( f \) is uniformly continuous in \( (y,z) \), the results from \cite{hamadene1997bsdes}, \cite{hamadene2003multidimensional}, and \cite{fan2010uniqueness} confirm that Proposition \ref{unique decoupled fbsde} holds.  

Moreover, since any uniformly continuous function can be bounded by a linear growth function, \( \bar{f} \) still satisfies the linear growth condition required in Lemma \ref{bsde continuity}.
	\end{Remark}
	
Let us prove $\psi$ has a fixed point. Recall Remark \ref{weak solution} that $$\scL^H(X^\mu_t)=\brak{\bbP\circ (X^{i,\mu}_t)^{-1}}_{i\in\scO}=\brak{\bbP^\mu\circ (\hat X^{i}_t)^{-1}}_{i\in\scO}.$$

	\begin{lemma}\label{fixed_point_lemma}
		$\psi$ has a fixed point on
        \begin{align*}
		E:=\left\{\mu\in\scC\brak{[0,T];\scP^H_1(\bbR^m)}:\scK(\mu_t,\mu_s)\leq K|t-s|^{\frac{1}{2}}\right\},
	\end{align*}
	\end{lemma}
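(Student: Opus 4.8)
The plan is to apply Schauder's fixed point theorem to $\psi$ on $E$. Regard $E$ as a subset of the Banach space $\scC\brak{[0,T];\scM^H}$, where $\scM$ is the space of finite signed measures on $\bbR^m$ with finite first moment, normed by the Kantorovich--Rubinstein norm, so that $\scK$ is the restriction of the induced product norm. First I would record the structural properties of $E$. It is nonempty, since it contains every constant path; it is closed, which is exactly the completeness of $(E,\rho)$ proved in the preceding remark; and it is convex, because the joint convexity of $W_1$ (obtained by gluing couplings, $W_1(\lambda\mu+(1-\lambda)\mu',\lambda\nu+(1-\lambda)\nu')\le\lambda W_1(\mu,\nu)+(1-\lambda)W_1(\mu',\nu')$) shows that the H\"older bound defining $E$ is preserved under convex combinations. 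Next, $\psi$ is a self-map of $E$ once $K$ is chosen large: by Lemma \ref{lemma_mu}, $\psi(\mu)\in\scC\brak{[0,T];\scP^H_1(\bbR^m)}$ and $\scK(\psi_t(\mu),\psi_s(\mu))\le C|t-s|^{1/2}$ with $C$ independent of $\mu$, so $\psi(E)\subseteq E$ provided $K\ge C$. The same lemma supplies the uniform second-moment bound $\sup_{\mu\in E}\sup_{t}\bbE|X^{i,\mu}_t|^2\le C$, which is the other ingredient we need.

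For relative compactness of $\psi(E)$ in $(E,\rho)$ I would invoke Arzel\`a--Ascoli. Equicontinuity of $\{\psi(\mu):\mu\in E\}$ is immediate from the uniform modulus $\scK(\psi_t(\mu),\psi_s(\mu))\le C|t-s|^{1/2}$. For pointwise relative compactness, the bound $\sup_{\mu\in E}\bbE|X^{i,\mu}_t|^2\le C$ makes $\{\psi^i_t(\mu):\mu\in E\}$ tight (Markov's inequality) and uniformly integrable in the first moment, hence relatively compact in $\brak{\scP_1(\bbR^m),W_1}$; taking products, $\{\psi_t(\mu):\mu\in E\}$ is relatively compact in $\brak{\scP^H_1(\bbR^m),\scK}$ for each fixed $t$. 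Arzel\`a--Ascoli then yields that $\overline{\psi(E)}$ is compact in $(\scC\brak{[0,T];\scP^H_1(\bbR^m)},\rho)$, and $\overline{\psi(E)}\subseteq E$ since $E$ is closed.

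The remaining, and I expect the most delicate, point is the continuity of $\psi$ on $(E,\rho)$. Let $\mu^n\to\mu$ in $E$; I must show $\sup_t\scK(\psi_t(\mu^n),\psi_t(\mu))\to 0$. By Remark \ref{weak solution}, $\psi^i_t(\nu)=\bbP^{i,\nu}\circ(\hat X^i_t)^{-1}$ with $d\bbP^{i,\nu}=\scE^i_T(\nu)\,d\bbP$, where the forward process $\hat X^i$ of \eqref{decoupled fbsde} does not depend on the measure argument. Using the dual formula for $W_1$ with test functions normalised so that $\phi(0)=0$ (so $|\phi(\hat X^i_t)|\le|\hat X^i_t|$) and the martingale property of $\scE^i(\nu)$, one gets
\[
W_1\brak{\psi^i_t(\mu^n),\psi^i_t(\mu)}\;\le\;\bbE\edg{\abs{\scE^i_t(\mu^n)-\scE^i_t(\mu)}\,\abs{\hat X^i_t}}\;\le\;\norm{\scE^i_t(\mu^n)-\scE^i_t(\mu)}_{L^2}\,\norm{\hat X^i_t}_{L^2},
\]
and $\norm{\hat X^i_t}_{L^2}$ is bounded uniformly in $t$ by Proposition \ref{l2 condition}. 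So it suffices to show $\sup_t\norm{\scE^i_t(\mu^n)-\scE^i_t(\mu)}_{L^2}\to 0$. Writing $\theta^{i,\nu}_s:=(\sigma^i)^\intercal(\sigma^i(\sigma^i)^\intercal)^{-1}(s,\hat X^i_s)b^i(s,\hat X^i_s,\hat Y^{i,\nu}_s,\hat Z^{i,\nu}_s,\nu_s)$, the uniform ellipticity of $\sigma^i$ and the uniform bound on $b^i$ make $\theta^{i,\nu}$ uniformly bounded, so all $L^p$-moments of $\scE^i(\mu^n)$ and $\scE^i(\mu)$ are bounded uniformly in $n$ and $t$. By Lemma \ref{bsde continuity} we have $\hat Y^{i,\mu^n}\to\hat Y^{i,\mu}$ and $\hat Z^{i,\mu^n}\to\hat Z^{i,\mu}$ in $L^2([0,T]\times\Omega)$, and together with the continuity of $b^i$ in $(y,z,m)$ and $\mu^n\to\mu$ this gives $\theta^{i,\mu^n}\to\theta^{i,\mu}$ in $L^2([0,T]\times\Omega)$ (first along a subsequence $\bbP\otimes dt$-a.e., then in $L^2$ by dominated convergence using the uniform bound). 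Finally $\scE^i_\cdot(\mu^n)-\scE^i_\cdot(\mu)$ solves a linear SDE driven by $W^i$, so It\^o's formula and Gronwall's inequality give $\sup_t\bbE\abs{\scE^i_t(\mu^n)-\scE^i_t(\mu)}^2\lesssim\bbE\int_0^T\scE^i_s(\mu)^2\abs{\theta^{i,\mu^n}_s-\theta^{i,\mu}_s}^2ds$, which tends to $0$ by dominated convergence (the integrand is dominated by a constant multiple of the integrable $\scE^i_s(\mu)^2$); a standard subsequence argument removes the passage to subsequences. This establishes continuity.

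With $E$ nonempty, convex and closed, $\psi:E\to E$ continuous, and $\overline{\psi(E)}$ compact, restricting $\psi$ to the compact convex set $\overline{\mathrm{conv}}\,\psi(E)\subseteq E$ and applying Schauder's fixed point theorem produces $\mu^*\in E$ with $\psi(\mu^*)=\mu^*$. Then $\scL^H(X^{\mu^*}_t)=\mu^*_t$ for all $t$, so $(X^{\mu^*},Y^{\mu^*},Z^{\mu^*})$ is a strong solution of the mean-field FBSDE \eqref{fbsde}, completing the proof.
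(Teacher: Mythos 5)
Your proposal is correct and follows essentially the same route as the paper: Schauder's theorem on $E$, with the self-map and compactness of $\psi(E)$ obtained from the uniform moment and H\"older bounds of Lemma \ref{lemma_mu} plus an Arzel\`a--Ascoli/tightness argument, and continuity obtained by representing $\psi_t(\mu)$ via the Girsanov densities $\scE^i_t(\mu)$ and invoking Lemma \ref{bsde continuity}. The only (immaterial) differences are in the low-level estimates: you verify pointwise tightness directly from the second-moment bound rather than via $\bbE_{\bbP^\mu}|\hat X^i_t|^2$, and you control $\scE^i_t(\mu^n)-\scE^i_t(\mu)$ through the linear SDE it satisfies plus Gr\"onwall rather than the paper's inequality $|e^x-e^y|\le |x-y|(e^x+e^y)$.
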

	\begin{proof}
		To apply the Schauder's fixed point theorem, we need to check the following conditions:
		\begin{enumerate}[label=(\roman*)]
			\item  $E$ is a non-empty convex set and $\psi(E)\subset E$ is contained in a compact subset; and
			\item $\psi$ is continuous on $E$.
		\end{enumerate}
		
		Proof of (i): It is trivial that $E$ is non-empty and convex. In addition, from Lemma \ref{lemma_mu}, we have $\psi(\mu)$ is $\half$-H\"older continuous for each given $\mu$. Note that, for $\mu\in E$ and $\delta:=(\delta_1,\cdots,\delta_H)\in\scP^H_1(\bbR^m)$ with each $\delta_i$ the Dirac-measure at $0$, we have
		\begin{align*}
			\scK(\psi_t(\mu),\delta)\leq\sum_{i\in\scO}\bbE |X^{i,\mu}_t|<\infty,
		\end{align*}
		so that $\psi(E)\subset E$.
        
		To show that \(\psi(E)\) is contained in a compact subset of \(E\), define 
		\begin{align*}
			\scM&:=\left\{\brak{\bbP\circ (X^{i,\mu}_t)^{-1}}_{i\in\scO, t\in[0,T]} : \mu\in\scC\brak{[0,T];\scP^H_1(\bbR^m)}\right\}\subset \scC([0,T];\scP^H_1(\bbR^m))
        \end{align*}
        and, for each $t\in[0,T]$,    
        \begin{align*}
			\scM_t&:=\left\{\brak{\bbP\circ (X^{i,\mu}_t)^{-1}}_{i\in\scO} : \mu\in\scC\brak{[0,T];\scP^H_1(\bbR^m)}\right\}\subset \scP^H_1(\bbR^m).
		\end{align*}
 Since $\scM$ consists of equicontinuous measure-valued processes, an Arzelà–Ascoli type argument shows that it suffices to verify, for each fixed \(t\in[0,T]\), the relative compactness of \(\scM_t\); that is, we must check the following tightness condition for each \(i\in\scO\):
		\begin{align}
			\label{tightness}	\lim_{r\rightarrow\infty}\sup_{\nu\in\scM_t}\sup_{i\in\scO}\int_{\brak{B_r^{i}}^c}|x|\nu^i(dx)=0,
		\end{align}
		where $B_r^i\in\bbR^m$ is the ball of radius $r$ centered at origin. For a $\mu\in\scC\brak{[0,T];\scP^H_1(\bbR^m)}$ and $t\in[0,T]$, by the Markov inequality and the H\"{o}lder inequality, we obtain 
		\begin{align*}
			\int_{\brak{B_r^{i}}^c}|x|\bbP\circ (X^{i,\mu}_t)^{-1}(dx)&=\bbE\abs{X_t^{i,\mu}}\indicator{\abs{X_t^{i,\mu}}\geq r}\\
			&\leq \frac{1}{\sqrt{r}}\brak{\bbE_{\bbP^\mu}\abs{\hat{X}^i_t}^2}^{\frac{1}{2}}\leq \frac{1}{\sqrt{r}}\brak{\bbE\abs{\scE^i_T(\mu)}^2}^{\frac{1}{4}}\brak{\bbE\abs{\hat{X}^i_t}^4}^{\frac{1}{4}}.
		\end{align*}

Note that, by the definition of $\scE^i_t(\mu)$, we have for a suitable constant $C$
 \begin{align}
 \label{boundedness of sto exponential}
 \begin{split}
\bbE&\sup_{s\in[0,t]}\abs{\scE^i_s(\mu)}^{2}\leq 2+2\bbE\sup_{s\in[0,t]}\abs{\int_0^s\Tilde{b}^i(r,\hat{X}^i_r,\hat{Y}_r^{i,\mu},\hat{Z}^{i,\mu}_r,\mu_r)\scE^i_r(\mu)dW^i_r}^2     \\
&\leq 2+8\bbE\int_0^t\abs{\Tilde{b}^i(s,\hat{X}^i_s,\hat{Y}_s^{i,\mu},\hat{Z}^{i,\mu}_s,\mu_s)\scE^i_s(\mu)}^2ds\leq 2+C\int_0^t\bbE\sup_{r\in[0,s]}\abs{\scE^i_r(\mu)}^2ds.
 \end{split}
 \end{align}
The Gr\"{o}nwall inequality implies that there exists a constant $C'$ independent of $\mu$, such that $\bbE\sup_{t\in[0,T]}\abs{\scE^i_t(\mu)}^{2}\leq C'$.
        
        Since $\bbE|\hat X^i_t|^4<\infty,$ the right-hand side tends to \(0\) as \(r\to\infty\) which guarantees the condition \eqref{tightness}. By the Prokhorov theorem each \(\scM_t\) is relatively compact in \(\scP_1^H(\bbR^m)\), and hence \(\scM\) is relatively compact in \(\scC\bigl([0,T];\scP^H_1(\bbR^m)\bigr)\). Because \(\psi(E)\) is an equicontinuous family taking values in \(\scM\), it follows that \(\psi(E)\) is contained in a compact subset of \(E\).

		Proof of (ii):\begin{small}
By Remark \ref{weak solution}, if $(\hat{X}^{i},\hat{Y}_t^{i,\mu},\hat{Z}_t^{i,\mu}, W^{i,\mu},\bbP^\mu)$ and $(\hat{X}^{i},\hat{Y}_t^{i,\nu},\hat{Z}_t^{i,\nu}, W^{i,\nu},\bbP^\nu)$ are weak solutions of \end{small}
\begin{align*}
d\hat{X}^{i}_t&=\brak{h^i(t,\hat{X}^{i}_t)+b^i(t,\hat{X}^{i}_t,Y_t^{i,\mu},Z_t^{i,\mu},\mu_t)}dt+\sigma^i(t,\hat{X}^{i}_t)dW^{i}_t\\
d\hat{X}^{i}_t&=\brak{h^i(t,\hat{X}^{i}_t)+b^i(t,\hat{X}^{i}_t,Y_t^{i,\nu},Z_t^{i,\nu},\nu_t)}dt+\sigma^i(t,\hat{X}^{i}_t)dW^{i}_t,
\end{align*}
respectively, then we have
\begin{align*}
	\bbP\circ (X^{i,\mu}_t)^{-1}&=\bbP^\mu\circ (\hat{X}^{i}_t)^{-1}\\
    \bbP\circ (X^{i,\nu}_t)^{-1}&=\bbP^\nu\circ (\hat{X}^{i}_t)^{-1}.
\end{align*}
Thus we have
		\begin{align*}
			\scK(\psi_t(\mu),\psi_t(\nu)) &=\sum_{i=1}^H\sup_{\phi\in Lip_1}\abs{\int_{\bbR^m}\phi(x)\:[\psi^i_t(\mu)-\psi^i_t(\nu)](dx)}\\
			&\leq H\sup_{i\in\scO}\sup_{\phi\in Lip_1,\phi(0)=0}\left|\bbE_{\bbP^{\mu}}\phi( \hat X^{i}_t)-\bbE_{\bbP^\nu}\phi( \hat X^{i}_t)\right|\\
			&\leq H \sup_{i\in\scO}\bbE\edg{\abs{\scE^i_t(\mu)-\scE^i_t(\nu)}\abs{\hat{X}^{i}_t}}.
		\end{align*}
Using $\left|e^x-e^y\right|\leq|x-y|(e^x+e^y)$, the H\"{o}lder inequality, and the Minkowski inequality, we obtain 
\begin{small}
		\begin{align*}
			&\scK(\psi_t(\mu),\psi_t(\nu))\leq H\sup_{i\in\scO}\bbE\left(\scE^i_t(\mu)+\scE^i_t(\nu)\right)|\hat{X}^i_t|\\
			&\times\left|\int_0^t\Tilde{b}^i(s,\hat{X}^i_s,\hat{Y}_s^{i,\mu},\hat{Z}_s^{i,\mu},\mu_s)-\Tilde{b}^i(s,\hat{X}^i_s,\hat{Y}_s^{i,\nu},\hat{Z}_s^{i,\nu},\nu_s)dW^i_s\right. \\
			&-\left. \frac{1}{2}\int_0^t\left|\Tilde{b}^i(s,\hat{X}^i_s,\hat{Y}_s^{i,\mu},\hat{Z}_s^{i,\mu},\mu_s)\right|^2-\left|\Tilde{b}^i(s,\hat{X}^i_s,\hat{Y}_s^{i,\nu},\hat{Z}_s^{i,\nu},\nu_s)\right|^2ds\right|\\
		&\leq H\sup_{i\in\scO}\brak{\brak{\bbE\scE^i_t(\mu)^{2}}^{\frac{1}{2}}+\brak{\bbE\scE^i_t(\nu)^{2}}^{\frac{1}{2}}}\\
			&\times\left(\brak{\bbE\left|\int_0^t\Tilde{b}^i(s,\hat{X}^i_s,\hat{Y}_s^{i,\mu},\hat{Z}_s^{i,\mu},\mu_s)-\Tilde{b}^i(s,\hat{X}^i_s,\hat{Y}_s^{i,\nu},\hat{Z}_s^{i,\nu},\nu_s)dW^i_s\right|^4}^{\frac{1}{4}}\right.\\
			&+\frac{1}{2}\left.\brak{\bbE\left[\int_0^t\left|\left|\Tilde{b}^i(s,\hat{X}^i_s,\hat{Y}_s^{i,\mu},\hat{Z}_s^{i,\mu},\mu_s)\right|^2-\left|\Tilde{b}^i(s,\hat{X}^i_s,\hat{Y}_s^{i,\nu},\hat{Z}_s^{i,\nu},\nu_s)\right|^2\right|ds\right]^{4}}^{\frac{1}{4}}\right)\brak{\bbE|\hat{X}^i_t|^{4}}^{\frac{1}{4}}.
		\end{align*}
\end{small}
 Thus, we can deduce that
 there is a constant $C_{H,T}>0$ such that
	\begin{small}	
        \begin{align*}
	\sup_{t\in[0,T]}&\scK(\psi_t(\mu),\psi_t(\nu))\\
    &\leq \sup_{i\in\scO}C_{H,T}\left(\brak{\bbE\left[\int_0^T\left|\Tilde{b}^i(s,\hat{X}^i_s,\hat{Y}_s^{i,\mu},\hat{Z}_s^{i,\mu},\mu_s)-\Tilde{b}^i(s,\hat{X}^i_s,\hat{Y}_s^{i,\nu},\hat{Z}_s^{i,\nu},\nu_s)\right|^2ds\right]^2}^{\frac{1}{4}}\right.\\
			&+\frac{1}{2}\left.\brak{\bbE\left[\int_0^T\left|\left|\Tilde{b}^i(s,\hat{X}^i_s,\hat{Y}_s^{i,\mu},\hat{Z}_s^{i,\mu},\mu_s)\right|^2-\left|\Tilde{b}^i(s,\hat{X}^i_s,\hat{Y}_s^{i,\nu},\hat{Z}_s^{i,\nu},\nu_s)\right|^2\right|ds\right]^{4}}^{\frac{1}{4}}\right).
		\end{align*}
        \end{small}
		Notice that our assumptions tells us that there is some constant $C''$ such that $|\Tilde{b}^i(t,x,\hat Y_t^{i,\mu},z,m)|+|\Tilde{b}^i(t,x,\hat Y_t^{i,\nu},z,m)|\leq C''$ because $\hat Y_t^{i,\mu}$ and $\hat Y_t^{i,\nu}$ are bounded or $\tilde b$ is bounded. Therefore,
        \begin{align*}
&\bbE\left[\int_0^T\left|\left|\Tilde{b}^i(s,\hat{X}^i_s,\hat{Y}_s^{i,\mu},\hat{Z}_s^{i,\mu},\mu_s)\right|^2-\left|\Tilde{b}^i(s,\hat{X}^i_s,\hat{Y}_s^{i,\nu},\hat{Z}_s^{i,\nu},\nu_s)\right|^2\right|ds\right]^{4}\\
&\qquad\leq (C'')^4\bbE\left[\int_0^T\left|\left|\Tilde{b}^i(s,\hat{X}^i_s,\hat{Y}_s^{i,\mu},\hat{Z}_s^{i,\mu},\mu_s)\right|-\left|\Tilde{b}^i(s,\hat{X}^i_s,\hat{Y}_s^{i,\nu},\hat{Z}_s^{i,\nu},\nu_s)\right|\right|ds\right]^{4}\to 0\\
&\bbE\int_0^T\abs{\Tilde{b}^i(s,\hat{X}^i_s,\hat{Y}_s^{i,\mu},\hat{Z}_s^{i,\mu},\mu_s)-\Tilde{b}^i(s,\hat{X}^i_s,\hat{Y}_s^{i,\nu},\hat{Z}_s^{i,\nu},\nu_s)}^2ds\to 0
        \end{align*}
        as $\mu\to\nu$ by Lemma \ref{bsde continuity} and the transitivity of continuity. This concludes that $\psi$ is a continuous function on $E$.
	\end{proof}
	\begin{theorem}
		\label{main theorem}
		Under the Assumption \ref{existence assumption}, FBSDE \eqref{fbsde} has a \emph{strong} solution.
	\end{theorem}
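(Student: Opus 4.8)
The plan is to assemble the three preceding lemmas into the fixed-point scheme announced just before Lemma \ref{fixed_point_lemma}. First I would invoke Lemma \ref{fixed_point_lemma} to produce a $\mu^{\ast}\in E$ with $\psi(\mu^{\ast})=\mu^{\ast}$; here one should make sure the constant $K$ defining $E$ has been taken at least as large as the (\,$\mu$-independent\,) constant $C$ from Lemma \ref{lemma_mu}, so that the inclusion $\psi(E)\subset E$ used in the Schauder argument is legitimate and $E$ is genuinely invariant. By Lemma \ref{fbsde_ext_given_mu}, the decoupled-in-measure FBSDE \eqref{fbsde_given_mu} associated with this particular $\mu^{\ast}$ admits a unique strong solution $(X^{\mu^{\ast}},Y^{\mu^{\ast}},Z^{\mu^{\ast}})\in\bbH^2\times\bbH^2\times\bbH^2$, which in particular is $\bbF$-progressively measurable.

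The second step is to read off what the fixed-point property means: by the very definition of $\psi$ we have $\mu^{\ast}_t=\psi_t(\mu^{\ast})=\scL^H(X^{\mu^{\ast}}_t)$ for every $t\in[0,T]$. Substituting this identity back into the coefficients of \eqref{fbsde_given_mu} turns that system verbatim into \eqref{fbsde}: the drift $h^i(t,X^{i,\mu^{\ast}}_t)+b^i(t,X^{i,\mu^{\ast}}_t,Y^{i,\mu^{\ast}}_t,Z^{i,\mu^{\ast}}_t,\mu^{\ast}_t)$ becomes $h^i(t,X^{i,\mu^{\ast}}_t)+b^i(t,X^{i,\mu^{\ast}}_t,Y^{i,\mu^{\ast}}_t,Z^{i,\mu^{\ast}}_t,\scL^H(X^{\mu^{\ast}}_t))$, and likewise for $f^i$ and for the terminal data $g^i(X^{i,\mu^{\ast}}_T,\mu^{\ast}_T)=g^i(X^{i,\mu^{\ast}}_T,\scL^H(X^{\mu^{\ast}}_T))$. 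Hence $(X^{\mu^{\ast}},Y^{\mu^{\ast}},Z^{\mu^{\ast}})$ solves \eqref{fbsde}; being adapted to the Brownian filtration $\bbF$ and lying in $\bbH^2\times\bbH^2\times\bbH^2$, it is a \emph{strong} solution.

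I do not expect a genuine obstacle at this last stage, since all the analytic labour has already been carried out upstream: the a priori bounds and time-regularity of $\psi(\mu)$ in Lemma \ref{lemma_mu}, the Girsanov/weak-solution representation of Remark \ref{weak solution} giving $\scL^H(X^{\mu}_t)=(\bbP^{\mu}\circ(\hat X^i_t)^{-1})_{i\in\scO}$, the stability of the decoupled BSDE under $\mu^n\to\mu$ in Lemma \ref{bsde continuity}, and the tightness/compactness together with the continuity of $\psi$ verified inside Lemma \ref{fixed_point_lemma}. The only points deserving a sentence of care are: (i) confirming that the tuple built for the given-$\mu$ problem is truly a strong solution of the coupled problem, i.e.\ that no filtration enlargement is hidden in the change of measure — this is fine because Lemma \ref{fbsde_ext_given_mu} delivers a strong solution of \eqref{fbsde_given_mu} directly, the measure change being only an auxiliary device in its proof; and (ii) noting that no uniqueness for \eqref{fbsde} is asserted here (that is addressed separately in Corollary \ref{unique solution}), so exhibiting one fixed point suffices.
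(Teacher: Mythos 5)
Your proposal is correct and follows exactly the paper's own argument: Lemma \ref{fixed_point_lemma} supplies a fixed point $\mu^{\ast}$ of $\psi$, and Lemma \ref{fbsde_ext_given_mu} then yields a strong solution of \eqref{fbsde_given_mu} which, upon substituting $\mu^{\ast}_t=\scL^H(X^{\mu^{\ast}}_t)$, solves \eqref{fbsde}. The additional remarks on choosing $K$ large enough and on adaptedness are sensible points of care but do not change the argument.
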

    \begin{small}
	\begin{proof}
		By Lemma \ref{fixed_point_lemma} and Lemma \ref{fbsde_ext_given_mu}, we have $\mu_t=\scL^H(X_t)=\brak{\bbP\circ (X^{i,\mu}_t)^{-1}}_{i\in\scO}$ where $(X^{i,\mu}_t,Y^{i,\mu}_t,Z^{i,\mu}_t)$ is a strong solution of \eqref{fbsde_given_mu}.
	\end{proof}
    \end{small}
	Sometimes, one can use a localization argument to extend Assumption \ref{existence assumption} to the case where the coefficients have linear growth in the mean-field term.
    
	\begin{corollary}
		For a process $X\in \bbH^2(H,m)$, we let $\bar\rho(X_t)=\sum_{i=1}^H\bbE
		\rho^i(X^i_t)$, where $\rho^i:\bbR^m\rightarrow\bbR^m$ is twice differentiable with uniformly bounded first- and second-order derivatives. Now we consider mean-field FBSDE to be of the form:\\
		For each $i\in \scO$, consider
		\begin{align*}
			&h^i:[0,T]\times\bbR^m\rightarrow\bbR^m\\
			&b^i:[0,T]\times\bbR^m\times \bbR^n\times\bbR^{n\times d}\times\bbR^m\rightarrow\bbR^m\\
			&f^i:[0,T]\times\bbR^m\times \bbR^n\times\bbR^{n\times d}\times\bbR^m\rightarrow\bbR^n\\
			&\sigma^i:[0,T]\times\bbR^m  \rightarrow\bbR^{m\times d}  \\&g^i:\bbR^m\times\bbR^m\rightarrow\bbR^n,
		\end{align*}
		and FBSDE 
		\begin{align}
  \begin{split}
  \label{expectation fbsde}
&dX^i_t=\brak{h^i(t,\rho^i(X^i_t))+b^i(t,X^i_t,Y^i_t,Z^i_t,\bar\rho(X_t))}dt+\sigma^i(t,X^i_t)dW^i_t\\
			&dY^i_t=-f^i(t,X^i_t,Y^i_t,Z^i_t,\bar\rho(X_t))dt+Z^i_tdW^i_t\\
			&X^i_0=x^i;\ Y^i_T=g^i(X^i_T,\bar\rho(X_T)).
   \end{split}
		\end{align}
		We assume all the coefficients satisfy Assumption \ref{existence assumption} except that for $b^i,f^i$, and $g^i$ we allow linear growth in the term $\bar{\rho}(X)$. In this case, the mean-field FBSDE \eqref{expectation fbsde} admits a strong solution.
	\end{corollary}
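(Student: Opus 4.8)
The plan is to deduce the corollary from Theorem \ref{main theorem} by a \emph{truncation} of the mean-field argument, exploiting that $\bar\rho(X_t)=\sum_{i=1}^H\bbE[\rho^i(X^i_t)]$ is a \emph{deterministic} $\bbR^m$-valued curve depending on $\scL^H(X_t)$ only through the functional $m\mapsto\Phi(m):=\sum_{i=1}^H\int_{\bbR^m}\rho^i(x)\,m_i(dx)$, which is well defined on $\scP_1^H(\bbR^m)$ and, since each $\rho^i$ has uniformly bounded first derivative (hence is globally Lipschitz), is continuous for $\scK$ and grows at most linearly in the first moments. Fix a continuous cutoff $\zeta_N:\bbR^m\to\bbR^m$ with $\zeta_N(w)=w$ for $|w|\le N$ and $|\zeta_N(w)|\le|w|\wedge(N+1)$ everywhere, and set
\[
b^i_N(t,x,y,z,m):=b^i\bigl(t,x,y,z,\zeta_N(\Phi(m))\bigr),\qquad \tilde h^i(t,x):=h^i(t,\rho^i(x)),
\]
and analogously $f^i_N,g^i_N$. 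One checks directly that $(\tilde h^i,b^i_N,f^i_N,\sigma^i,g^i_N)_{i\in\scO}$ satisfy Assumption \ref{existence assumption}: the growth bounds hold with an $N$-dependent constant because $|\zeta_N(\Phi(m))|\le N+1$; continuity in $m$ follows from the continuity of $\Phi$ and $\zeta_N$ together with continuity of $b^i,f^i,g^i$ in their last argument; the condition on $h$ is preserved since $\rho^i$ is Lipschitz (so $|\tilde h^i|\le C$ under the first alternative and $|\tilde h^i(t,x)|\le C(1+|x|)$ under the second); and the ellipticity, the joint continuity in $(y,z)$, and the Lipschitz/uniform-continuity conditions on $f$ are inherited unchanged. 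Hence Theorem \ref{main theorem} yields, for every $N\in\bbN$, a strong solution $(X^N,Y^N,Z^N)\in\bbH^2\times\bbH^2\times\bbH^2$ of \eqref{expectation fbsde} with $b^i,f^i,g^i$ replaced by $b^i_N,f^i_N,g^i_N$.

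The core of the argument is an a priori estimate \emph{independent of $N$}. Write $m^N_t:=\bar\rho(X^N_t)$ and use $|\zeta_N(w)|\le 1+|w|$ uniformly in $N$, so that $|\zeta_N(\Phi(\scL^H(X^N_t)))|\le 1+|m^N_t|\le C\bigl(1+\sum_i(\bbE|X^{N,i}_t|^2)^{1/2}\bigr)$. Applying \Ito's formula to $|X^{N,i}_t|^2$, taking expectations, and using the upper ellipticity bound $\mathrm{tr}(\sigma^i(\sigma^i)^\intercal)\le\veps m$, the linear growth of $\tilde h^i$, and the growth bound on $b^i_N$, we obtain a differential inequality for $v^i_t:=\bbE|X^{N,i}_t|^2$. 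When $r>0$ the modulus $\rho_r$ vanishes, so $b^i_N$ carries no $Y$-dependence; summing over $i$ gives $\tfrac{d}{dt}\sum_i v^i_t\le C\bigl(1+\sum_i v^i_t\bigr)$, and Gr\"onwall bounds $\sup_t\sum_i v^i_t$ by a constant $R_0$ depending only on the data, whence $\sup_t|m^N_t|\le R_1$ for an $N$-independent $R_1$. When $r=0$, one first shows, exactly as in the proof of Lemma \ref{lemma_mu}, that $Y^{N,i}$ is uniformly bounded in terms of $\sup_s|m^N_s|$ (using that $g^i_N,f^i_N$ have linear growth in the truncated mean-field term); substituting this into the bound on $b^i_N$ yields a self-referential Gr\"onwall estimate whose constants must be arranged so that it closes, again giving $\sup_t|m^N_t|\le R_1$.

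Finally, choose any integer $N>R_1$. Then $|m^N_t|\le R_1<N$ for all $t$, so $\zeta_N(\Phi(\scL^H(X^N_t)))=\Phi(\scL^H(X^N_t))=\bar\rho(X^N_t)$, and therefore $b^i_N,f^i_N,g^i_N$ coincide with $b^i,f^i,g^i$ along $(X^N,Y^N,Z^N)$. Consequently $(X^N,Y^N,Z^N)$ is a strong solution of the original FBSDE \eqref{expectation fbsde}.

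The main obstacle is the $N$-uniform a priori bound, and within it the case $r=0$: there the magnitude of the backward component and that of the deterministic curve $m^N$ are mutually coupled through the only sub-linearly controlled modulus $\rho_r$ entering the bound on $b$, so the Gr\"onwall constants must be handled carefully to make the estimate close — mirroring, but slightly strengthening, the boundedness argument for $\sup_t\bbE|X^{i,\mu}_t|^2$ in Lemma \ref{lemma_mu}. Everything else is a routine verification that truncation preserves Assumption \ref{existence assumption}.
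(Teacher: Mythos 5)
Your proposal is correct and follows essentially the same route as the paper: truncate the mean-field argument so that Theorem \ref{main theorem} applies, derive an $N$-uniform a priori bound on $\bar\rho(X^N_t)$ via It\^{o}'s formula and the Gr\"onwall inequality, and then choose $N$ larger than that bound so the truncation becomes inactive. The only cosmetic difference is that the paper applies It\^{o}'s formula directly to $\rho^i(F^i_t)$ (exploiting the bounded first- and second-order derivatives of $\rho^i$) to obtain a first-moment Gr\"onwall estimate on $\bbE\abs{\rho^i(F^i_t)}$, whereas you estimate $\bbE\abs{X^{N,i}_t}^2$ and pass to $\bar\rho$ through the Lipschitz property of $\rho^i$; both close the same loop, and your explicit flagging of the $r=0$ case matches the level of detail the paper itself provides.
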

	\begin{proof}
		This can be proven by a localization argument. Let
		\begin{align*}
		P_N:[0,T]\times\bbR^m\times\bbR^n&\times\bbR^{n\times d}\times\bbR^m\ni(t,x,y,z,r)\\
        &\mapsto \brak{t,x,y,z,\frac{Nr}{|r|\vee N}}\in[0,T]\times\bbR^m\times\bbR^n\times\bbR^{n\times d}\times\bbR^m
		\end{align*}
		and $b_N:=b\circ P_N, f_N:=f\circ P_N$, and $g_N:=g\circ P_N$. Thus from Theorem \ref{main theorem}, we know that mean-field FBSDE
		\begin{align}
			\begin{split}
				\label{localized fbsde}
				&dF^i_t=\brak{h^i(t,\rho^i(F^i_t))+b^i_N(t,F^i_t,U^i_t,V^i_t,\bar\rho(F_t))}dt+\sigma^i(t,F^i_t)dW^i_t\\
				&dU^i_t=-f^i_N(t,F^i_t,U^i_t,V^i_t,\bar\rho(F_t))dt+V^i_tdW^i_t\\
				&F^i_0=x^i;\ U^i_T=g^i_N(F^i_T,\bar\rho(F_T))
			\end{split}
		\end{align}
		has a strong solution $(F,U,V)$ since $b_N(t,x,y,z,r),f_N(t,x,y,z,r),g_N(t,x,y,z,r)$ are uniformly bounded in $r$. Moreover, since $b,h$ grows linearly, $\rho^i$ has bounded derivative, and $\sigma$ is bounded, we can apply It\^{o}'s formula to deduce that
		\begin{align*}
			\bbE\abs{\rho^i(F^{i}_t)}&\lesssim \abs{\rho^i(x^i)}+\int_0^t 1+\bbE\abs{\rho^i(F_s^i)}+\abs{\bar\rho(F_s)}ds+\sqrt{T}\\
			&\lesssim \abs{\rho^i(x^i)}+\sqrt{T}+\int_0^t 1+\bbE\abs{\rho^i(F_s^i)}+\sum_{j=1}^H\bbE\abs{\rho^j(F_s^j)}ds.
		\end{align*}
		In other words, there exists a constant $C$ which does not depend on $N$ such that
		\begin{align*}
	\sum_{i=1}^H\bbE\abs{\rho^i(F_t^i)}\leq C\brak{ 1+\int_0^t \sum_{i=1}^H\bbE\abs{\rho^i(F_s^i)}ds}.
		\end{align*}
		Consequently, from Gr\"{o}nwall inequality, we know that $\abs{\bar\rho(F_t)}$ is bounded by a constant independent of $N$. Therefore, if we chose $N$ to be large enough, then the solution $(F, U, V)$ of \eqref{localized fbsde} would be the solution of mean-field FBSDE \eqref{expectation fbsde}.
	\end{proof}
 \subsection{Uniqueness of a Solution}
In this subsection, we present a sufficient condition that guarantee the existence and uniqueness of a strong solution for FBSDE \eqref{fbsde}. In particular, Corollary \ref{unique solution} applies when the FBSDE is decoupled. Let us recall the definition of \(\theta\)-modulus continuity from \cite{bauer2018strong}.
 
\begin{definition}
    A function $F:\scP_1^H(\bbR^m)\to\bbR^m$ is said to be $\theta$-modulus continuous if there exists a continuous function $\theta:\bbR^+\rightarrow\bbR^+$, with $\theta(y)>0$ for all $y\in\bbR^+$, $\int_0^z\frac{dy}{\theta(y)}=\infty$ for all $z\in\bbR^+$ such that for all $\mu,\nu\in\scP_1^H(\bbR^m)$
		\begin{align*}
			|F(\mu)-F(\nu)|^2&\leq \theta\brak{\scK(\mu,\nu)^2}.
		\end{align*}
\end{definition}

	\begin{corollary}
		\label{unique solution}
		Assume that Assumption \ref{existence assumption} holds. If $b(t,x,y,z,\mu)$ does not depend on $(y,z)$ and $\theta$-modulus continuous in $\mu$ uniformly in $(t,x)$, then the (decoupled) FBSDE \eqref{fbsde} has a unique strong solution.
	\end{corollary}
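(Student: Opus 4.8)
The plan is to read off existence from Theorem \ref{main theorem} and to concentrate on uniqueness, which I would prove in two stages. Since $b$ is independent of $(y,z)$, the forward equation of \eqref{fbsde} is a measure-dependent SDE not involving $(Y,Z)$: once the marginal flow $t\mapsto\scL^H(X_t)$ is shown to be the same for any two strong solutions, the forward component $X$ is pinned down, and $(Y,Z)$ then solves a BSDE with a prescribed driver and terminal condition, hence is determined as well; both steps use the well-posedness of \eqref{fbsde_given_mu} from Lemma \ref{fbsde_ext_given_mu}. So the crux is uniqueness of the marginal flow.

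Let $(X,Y,Z)$ and $(X',Y',Z')$ be two strong solutions, set $\mu_t:=\scL^H(X_t)$, $\nu_t:=\scL^H(X'_t)$, and note $\mu=\psi(\mu)$, $\nu=\psi(\nu)$, because a strong solution of \eqref{fbsde} is precisely a strong solution of \eqref{fbsde_given_mu} whose marginal flow reproduces the frozen measure (Lemma \ref{fbsde_ext_given_mu}). I would then re-run the continuity estimate in the proof of Lemma \ref{fixed_point_lemma}. Since $\tilde b^i$ no longer depends on $(\hat Y^i,\hat Z^i)$, that argument --- the H\"older and Burkholder--Davis--Gundy inequalities, the uniform $L^2$-bound \eqref{boundedness of sto exponential} on the stochastic exponentials $\scE^i(\mu),\scE^i(\nu)$, the fourth-moment bound $\sup_t\bbE|\hat X^i_t|^4<\infty$ of Proposition \ref{l2 condition}, and the boundedness of $\tilde b$ (so that the finite-variation part of $\scE^i_t(\mu)-\scE^i_t(\nu)$ is controlled through $\big|\,|\tilde b^i(\mu_s)|^2-|\tilde b^i(\nu_s)|^2\,\big|\le C\,|\tilde b^i(\mu_s)-\tilde b^i(\nu_s)|$ followed by Cauchy--Schwarz in time) --- localized to a general interval $[0,t]$ yields a constant $C_{H,T}$ with
\[
\scK\bigl(\psi_t(\mu),\psi_t(\nu)\bigr)^2\;\le\;C_{H,T}\,\sup_{i\in\scO}\ \bbE\!\int_0^t\bigl|\tilde b^i(s,\hat X^i_s,\mu_s)-\tilde b^i(s,\hat X^i_s,\nu_s)\bigr|^2\,ds .
\]
By uniform ellipticity $|\tilde b^i(s,x,\mu)-\tilde b^i(s,x,\nu)|\le C_\veps\,|b^i(s,x,\mu)-b^i(s,x,\nu)|$, and $\theta$-modulus continuity of $b$ in $\mu$, uniform in $(t,x)$ and $i$, gives the \emph{deterministic} pointwise bound $|b^i(s,x,\mu_s)-b^i(s,x,\nu_s)|^2\le\theta\bigl(\scK(\mu_s,\nu_s)^2\bigr)$. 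Writing $\gamma(t):=\scK(\mu_t,\nu_t)^2=\scK(\psi_t(\mu),\psi_t(\nu))^2$ and absorbing $C_\veps^2$ into $\theta$ --- which leaves $\int_{0^+}dy/\theta(y)=\infty$ intact --- this collapses to the closed inequality
\[
\gamma(t)\;\le\;C\int_0^t\theta\bigl(\gamma(s)\bigr)\,ds,\qquad t\in[0,T].
\]

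Because $\mu,\nu\in E$ are $\tfrac12$-H\"older in $\scK$, the function $\gamma$ is continuous, bounded and nonnegative with $\gamma(0)=0$; the integrability property $\int_{0^+}dy/\theta(y)=\infty$ built into the $\theta$-modulus then forces $\gamma\equiv0$ by a Bihari--Osgood argument --- exactly the uniqueness mechanism behind the condition of \cite{bauer2018strong}. Hence $\scL^H(X_t)=\scL^H(X'_t)$ for all $t$; calling this common flow $\mu$, the processes $X$ and $X'$ solve the \emph{same} SDE $dX^i_t=\bigl(h^i(t,X^i_t)+b^i(t,X^i_t,\mu_t)\bigr)dt+\sigma^i(t,X^i_t)\,dW^i_t$, which is strongly uniquely solvable under Assumption \ref{existence assumption} (Lemma \ref{fbsde_ext_given_mu}, cf.\ \cite{gyongy2001stochastic}), so $X$ and $X'$ are indistinguishable; then $(Y,Z)$ and $(Y',Z')$ solve one and the same BSDE and coincide by Lemma \ref{fbsde_ext_given_mu}. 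This proves uniqueness.

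The main obstacle is securing the closed Osgood-type inequality for $\gamma$: one must carry the Girsanov comparison of $\scL^H(X^\mu)$ and $\scL^H(X^\nu)$ through the H\"older/BDG estimates while keeping the right-hand side of the exact form $C\int_0^t\theta(\gamma(s))\,ds$ --- in particular ensuring that both the martingale and the finite-variation contributions to $\scE^i_t(\mu)-\scE^i_t(\nu)$ reduce, via boundedness of $\tilde b$ and Cauchy--Schwarz in time, to $\int_0^t\theta(\gamma(s))\,ds$ rather than to $\int_0^t\theta(\gamma(s))^2\,ds$ or the like, and that absorbing the accumulated constants does not destroy $\int_{0^+}dy/\theta(y)=\infty$. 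Once this inequality is available, the Bihari--Osgood conclusion and the decoupling stage are routine.
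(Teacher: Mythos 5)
Your proposal is correct, and it reaches the same Osgood-type inequality $\scK(\mu_t,\nu_t)^2\lesssim\int_0^t\theta\bigl(\scK(\mu_s,\nu_s)^2\bigr)ds$ and Bihari conclusion as the paper, but by a noticeably different route. The paper does not recycle the continuity estimate for $\psi$: it introduces three auxiliary measures $\bbP'$, $\bar\bbP$, $\hat\bbP$, represents $X^1=\Phi(\bar W)$ and $X^2=\Phi(\hat W)$ through the solution map of the driftless SDE, deduces $\bbP\circ(X^1)^{-1}=\bbP'\circ(X^2)^{-1}$, and then estimates $\scK\bigl(\bbP\circ(X^2_t)^{-1},\bbP'\circ(X^2_t)^{-1}\bigr)\lesssim\bigl(\bbE|1-\scE_t|^2\bigr)^{1/2}$, where the exponent of $\scE$ carries the drift difference $\Tilde{b}(t,X^2_t,\scL(X^2_t))-\Tilde{b}(t,X^2_t,\scL(X^1_t))$; it then concludes uniqueness in law for the forward equation and invokes \cite{cherny2005singular} (strong existence plus uniqueness in law implies a unique strong solution). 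You instead observe that both solutions induce fixed points of $\psi$ on $E$, quantify the continuity estimate of Lemma \ref{fixed_point_lemma} by comparing both Girsanov densities against the common reference process $\hat X$, and finish by noting that once the marginal flows coincide the two solutions solve the same frozen-measure FBSDE \eqref{fbsde_given_mu}, whose pathwise uniqueness is already in Lemma \ref{fbsde_ext_given_mu} — thereby avoiding Cherny's theorem altogether. Both arguments are sound; yours is arguably more economical because it reuses the existence machinery, while the paper's intermediate-measure coupling is self-contained and makes the "uniqueness in law of the forward equation" statement explicit. Your bookkeeping of the quadratic-variation term (boundedness of $\Tilde b$ plus Cauchy--Schwarz in time so that the fourth-moment term still collapses to $\bigl(\int_0^t\theta(\gamma(s))ds\bigr)^{1/2}$, using that the $\theta$-modulus bound is deterministic and uniform in $\omega$) is exactly the point that needs care, and you handle it correctly.
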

	\begin{proof}
	Without loss of generality, we consider the case when there is one population. 
 Under measure $\bbP$, FBSDE \eqref{fbsde} is
\begin{align}
		\begin{split}
			\label{uniquefbsde}
			&dX_t=\brak{h(t,X_t)+b(t,X_t,\scL(X_t))}dt+\sigma(t,X_t)dW_t\\
			&dY_t=-f(t,X_t,Y_t,Z_t,\scL(X_t))dt+Z_tdW_t\\
			&X_0=x;\ Y_T=g(X_T,\scL(X_T)).
		\end{split}
	\end{align}
 Since the FBSDE is decoupled and for a given $X$, the backward equation has a unique solution under our assumption, we only need to check whether the forward equation in \eqref{uniquefbsde} is uniquely solvable.

Let $(X^1, Y^1,Z^1)$ and $(X^2, Y^2,Z^2)$ be two solutions of the FBSDE. 

Let us denote $\Tilde{b}(t,x,m):=(\sigma)^\intercal(\sigma(\sigma)^\intercal)^{-1}(t,x)b(t,x,m)$ and let $\bbP'$ be the probability measure equivalent to $\bbP$ making
\begin{align}
\label{W prime definition}
    dW'_t=\brak{\Tilde{b}(t,X_t^2,\scL(X^2_t))-\Tilde{b}(t,X^2_t,\scL(X^1_t))}dt+dW_t.
\end{align}
a $\bbP'$-Brownian motion $W'$. Likewise, we let $\bar\bbP$ and $\hat{\bbP}$ be probability measures equivalent to $\bbP$ making
\begin{align}
\label{Q brownian motions}
\begin{split}
    d\Bar{W}_t&=\Tilde{b}(t,X^1_t,\scL(X^1_t))dt+dW_t\\
    d\hat{W}_t&=\Tilde{b}(t,X^2_t,\scL(X^1_t))dt+dW'_t,
\end{split}
\end{align}
$\bar \bbP$- and $\hat \bbP$- Brownian motions, resspectively.

Under each measure $\bbP', \bar\bbP$, and $\hat \bbP$, we have
\begin{align*}
dX^2_t&= \brak{h(t,X^2_t)+b(t,X^2_t,\scL(X^1_t))}dt+\sigma(t,X^2_t)dW'_t\\
    dX^1_t&=h(t,X^1_t)dt+\sigma(t,X^1_t)d\Bar{W}_t,\\
    dX^2_t&=h(t,X^2_t)dt+\sigma(t,X^2_t)d\hat{W}_t,
\end{align*}
respectively.

Notice that the following SDE has a unique strong solution
\begin{align*}
    dX_t&=h(t,X_t)dt+\sigma(t,X_t)dB_t,
\end{align*}
for a Brownian motion $B$. Thus we know there exists a measurable function $\Phi:[0,T]\times C([0,T];\bbR^d)\rightarrow\bbR^m$ such that
\begin{align*}
    X^1_t=\Phi_t(\Bar{W})\qquad\text{and}\qquad
    X^2_t=\Phi_t(\hat{W})
\end{align*}
$\bbP$-almost surely.
Furthermore, we plug the above equations in \eqref{Q brownian motions} 
and thus we can obtain that there exists a measurable function $\Psi:[0,T]\times C([0,T];\bbR^d)\rightarrow\bbR^d$ such that 
\begin{align*}
    W_t&=\Psi_t(\Bar{W})\qquad \text{and}\qquad W'_t=\Psi_t(\hat{W}).
\end{align*}
Thus we have for all bounded measurable function $F:C([0,T];\bbR^d)^2\rightarrow\bbR$
\begin{align*}
    \bbE F(W,\Bar{W})&=\bbE_{\bar\bbP}\scE\brak{\int_0^T\Tilde{b}(t,X^1_t,\scL(X^1_t))d\Bar{W}_t}F(\Psi(\Bar{W}),\Bar{W})\\
    &=\bbE_{\hat{\bbP}}\scE\brak{\int_0^T\Tilde{b}(t,X^2_t,\scL(X^1_t))d\hat{W}_t}F(\Psi(\hat{W}),\hat{W})=\bbE_{\bbP'} F(W',\hat{W}).
\end{align*}
Therefore we can conclude
\begin{align*}
    \bbP\circ(W,\Bar{W},X^1)^{-1}=\bbP'\circ(W',\hat{W},X^2)^{-1},
\end{align*}
and so $\bbP\circ(X^1)^{-1}=\bbP'\circ(X^2)^{-1}$. Recall \eqref{W prime definition} and we can write $\frac{d\bbP'}{d\bbP}=\scE_T$ where $\scE_t$ is given by
\begin{align}
\label{bar P epsilon}
\begin{split}
    d\scE_t&=-\brak{\Tilde{b}(t,X_t^2,\scL(X^2_t))-\Tilde{b}(t,X^2_t,\scL(X^1_t))}\scE_tdW_t\\
    \scE_0&=1.
    \end{split}
\end{align}
The definition of $\scK$ and Lemma \ref{lemma_mu} imply that
\begin{align*}
    \scK&\brak{\bbP\circ(X^2_t)^{-1},\bbP'\circ(X^2_t)^{-1}}=\sup_{h\in Lip_1}\abs{\bbE h(X^2_t)-\bbE\scE_t h(X^2_t)}\\
    &=\sup_{h\in Lip_1}\abs{\bbE(1-\scE_t)h(X^2_t)}\lesssim \brak{\bbE\abs{1-\scE_t}^2}^\frac{1}{2}.
\end{align*}
Furthermore, by \eqref{bar P epsilon} and the uniform boundedness of $\bbE\sup_{r\in[0,T]}\abs{\scE_r}^2$ obtained by applying Gr\"onwall inequality to \eqref{boundedness of sto exponential}, we obtain
\begin{small}
\begin{align*}
\scK&\brak{\bbP\circ(X^2_t)^{-1},\bbP'\circ(X^2_t)^{-1}}\lesssim\brak{\bbE\int_0^t\abs{\brak{\Tilde{b}(s,X_s^2,\scL(X^2_s))-\Tilde{b}(s,X^2_s,\scL(X^1_s))}\scE_s}^2ds}^\frac{1}{2}\\
\leq&\brak{\bbE\sup_{r\in[0,T]}\abs{\scE_r}^2\int_0^t\abs{\brak{\Tilde{b}(s,X_s^2,\scL(X^2_s))-\Tilde{b}(s,X^2_s,\scL(X^1_s))}}^2ds}^{\frac{1}{2}}
\\
\lesssim&\brak{\int_0^t\theta\brak{\scK\brak{\bbP\circ(X^2_t)^{-1},\bbP'\circ(X^2_t)^{-1}}^2}ds}^{\frac{1}{2}}.
\end{align*}
\end{small}
Let $u(t):=\scK\brak{\bbP\circ(X^2_t)^{-1},\bbP'\circ(X^2_t)^{-1}}^2$. Since $u(\cdot)$ is continuous and $u(t)\lesssim\int_0^t\theta(u(s))ds$, we have $u(t)=0$ for all $t\in[0,T]$ by Bihari's inequality. Thus, we have $\bbP\circ(X^2)^{-1}=\bbP'\circ(X^2)^{-1}=\bbP\circ(X^1)^{-1}$ which means uniqueness in law holds for the forward equation in FBSDE \eqref{uniquefbsde}. From \cite{cherny2005singular}, strong existence and uniqueness in law imply a unique strong solution. Moreover, from our assumptions, for a given $X$, the backward equation in FBSDE \eqref{uniquefbsde} is also uniquely solvable, which means the system has a unique solution.
	\end{proof}
    \begin{Remark}
        If we focus on the existence and uniqueness of a strong solution for the forward equation, Corollary \ref{unique solution} is a partial generalization of \cite{bauer2018strong} to the case of non-constant volatility.
    \end{Remark}

	We provide an example here to demonstrate that even in a simple case, the uniqueness of \eqref{fbsde} may fail. 
	\begin{Example}
		We consider the following mean-field FBSDE with $\scO=\{1\}$:
		\begin{align}
  \begin{split}
			\label{counter example}
			dX_t&=b(Y_t)dt+dW_t\\
			dY_t&=-f(\bbE(X_t))dt+Z_tdW_t\\
			X_0&=0;\ Y_{\pi/4}=g(\bbE(X_{\pi/4})).
\end{split}		
  \end{align}
		We let $b,f,g$ be Lipschitz and bounded functions where  $b(x)=f(x)=g(x)=x$ when $|x|\leq C$, where $C$ is a large enough constant. In this case, one can easily verify that $(X, Y, Z)$ with 
        \begin{align*}
            X_t&=C'\sin(t)+W_t, \\
            Y_t&=C'\cos(t), \\
            Z_t&=0
        \end{align*} where $C'\leq C$ is an arbitrary constant, is a solution of FBSDE \eqref{counter example}. Thus, the uniqueness fails.
		
	\end{Example}

	\section{Mean-field FBSDE and multi-population mean-field game}
	\subsection{Model setup}
Let $ W^1,\cdots, W^H$ be independent Brownian motions and consider a system with $H$ a number of populations indexed by $i=1,2,..., H$. Assume that each population consists of a large number of indistinguishable players. For each population $i$, we denote the state and the control of the representative at time $t$ as $X^i_t,\alpha^i_t$, respectively. The aggregated state of all players in population $i$ is approximated by the probability measure $\mu^i_t=\scL(X^i_t)$. For $\mu=(\mu^1,...,\mu^H)$, we assume $X^i$ evolves as
	\begin{align}
		\label{multi-population state}
		dX^i_t=b^i(t,X^i_t,\mu_t,\alpha^i_t)dt+\sigma^i(t,X^i_t) dW^i_t.
	\end{align}

  The representative player of population $i$ tries to optimize
 	\begin{align}
    \label{multi-population utility}
J^i(\alpha^i,\mu):=\bbE\left[\int_0^Tf^i(t,X^i_t,\mu_t,\alpha^i_t)dt+g^i(X^i_T,\mu_T)\right]\quad\text{ subject to }\quad
		\mu_t=\scL^H(X_t).
	\end{align}
To the best of our knowledge, from a probabilistic perspective, there are two main approaches to studying this problem, both of which align naturally with our framework.
\subsection{Strong solvability of multi-population mean-field games in weak formulation}
\label{weak formulation game}
 To begin with, we mention a pioneering work by \cite{carmona2015probabilistic}, in which they studied mean-field games in a probabilistic weak formulation. Their approach exploits the comparison theorem of BSDEs and translates the search for the Nash equilibrium of a mean-field game into a decoupled mean-field FBSDE problem. This technique enables the study of mean-field games with data that may depend discontinuously on the state variable. However, because they apply a measure-change technique, the resulting equilibrium is not necessarily adapted to the original filtration. To solve the mean-field game in the strong sense, one must solve a coupled mean-field FBSDE; thus, our results offer a useful contribution. Moreover, we consider the model in a multi-population context.
 
	We denote the control space by $A$, which is a compact convex subset of a normed vector space, and let the set $\bbA$ of admissible controls consist of all progressively measurable $A$-valued processes.
	We consider an MPMFG under the measure $\bbP$ with
	\begin{align*}
		&h^i:[0,T]\times\bbR^m\rightarrow\bbR^m\\
		&b^i:[0,T]\times\bbR^m\times\scP^H_1(\bbR^m)\times A\rightarrow\bbR^m\\
		&f^i:[0,T]\times\bbR^m\times\scP^H_1(\bbR^m)\times A\rightarrow\bbR\\
		&\sigma^i:[0,T]\times\bbR^m  \rightarrow\bbR^{m\times d}  \\&g^i:\bbR^m\times\scP^H_1(\bbR^m)\rightarrow\bbR.
	\end{align*}
	We consider equation \eqref{multi-population state} is of the following form
	\begin{align*}
		dX^i_t=\brak{h^i(t,X^i_t)+b^i(t,X^i_t,\mu_t,\alpha^i_t)}dt+\sigma^i(t,X^i_t) dW^i_t,
	\end{align*}
	with the utility functional given by \eqref{multi-population utility}
	\begin{align*}
		J^{i}(\alpha^i,\mu)=\bbE\left[\int_0^Tf^i(t,X^i_t,\mu_t,\alpha^i_t)dt+g^i(X^i_T,\mu_T)\right],
	\end{align*}
	and we require that
	\begin{align*}
		\mu_t=\scL^H(X_t),\quad\forall t\in[0,T].
	\end{align*}
	The goal of representative $i$ is to find an optimal control $\hat{\alpha}^i$ such that 
	\begin{align*}
		J^{i}\brak{\hat{\alpha}^i,\mu}\geq J^{i}(\alpha^i,\mu)
	\end{align*}
	for all $\alpha^i\in\bbA$ and $\mu_t=\scL^H(X_t)$.
	
	For any $i\in\scO$ and $(t,x,m,a)\in[0,T]\times\bbR^m\times\scP^H_1(\bbR^m)\times A$, we assume coefficients $g^i(x,m), f^i(t,x,m,a), b^i(t,x,m,a)$ are continuous in $(m,a)$ for each $(t,x)$. We let $r>0$ and assume
	\begin{align*}
		|g^i(x,m)|&\leq C(1+|x|^r)\\
		|f^i(t,x,m,a)|&\leq C(1+|x|^r)\\
		|b^i(t,x,m,a)|&\leq C.
	\end{align*}
	We also assume $h^i$ and $\sigma^i$ satisfy Assumption \ref{existence assumption}.
	
	We denote $\Tilde{b}^i(t,x,m,\alpha)=(\sigma^i)^\intercal(\sigma^i(\sigma^i)^\intercal)^{-1}(t,x)b^i(t,x,m,\alpha)$ and introduce the following decoupled FBSDE
	\begin{align}
		\label{weak formation model}
		\begin{split}
			dX^i_t&=h^i(t,X^i_t)dt+\sigma^i(t,X^i_t)dW^{i,\mu,\alpha}_t\\
			dY^i_t&=-\brak{f^i(t,X^i_t,\mu_t,\alpha^i_t)+Z^{i}_t\tilde{b}^i(t,X^i_t,\mu_t,\alpha^i_t)}dt+Z^i_tdW^{i,\mu,\alpha}_t\\
			X^i_0&=x^i;\ Y^i_T=g^i(X^i_T,\mu_T).
		\end{split}
	\end{align}
 Here, the process $W^{i,\mu,\alpha}$ with
 \begin{align*}
dW^{i,\mu,\alpha}_t=\Tilde{b}^i(t,X^i_t,\mu_t,\alpha^i_t)dt+dW^i_t
 \end{align*}
 is a well-defined Brownian motion for a given $(X^i,\mu,\alpha^i)$ as $\Tilde{b}$ is bounded.
	Furthermore, we define
	\begin{align*}
		\Bar{f}^i(t,x,m,z,a)&:=f^i(t,x,m,a)+z\cdot\Tilde{b}^i(t,x,m,a)\\
		H^i(t,x,m,z)&:=\sup_{a\in A}\Bar{f}^i(t,x,m,z,a)\\
		A^i(t,x,m,z)&:=\{a\in A:\Bar{f}^i(t,x,m,z,a)=H^i(t,x,m,z)\}.
	\end{align*}
	Notice that for each $(t,x,m,z)$, the set $A^i(t,x,m,z)$ is always non-empty since $A$ is compact and $\Bar{f}^i$ is continuous in $a$. The existence of $\hat{\alpha}^i\in A^i(t,x,m,z)$ follows from a well-known measurable selection theorem (see, e.g., Theorem 18.19 of \cite{aliprantis2007infinite}). Specifically, there exists a function $\hat{\alpha}^i:[0, T]\times\bbR^m\times\scP^H_1(\bbR^m)\times\bbR^d$ such that
	\begin{align*}
		\hat{\alpha}^i(t,x,m,z) \in A^i(t,x,m,z)\ \forall(t,x,m,z),
	\end{align*}
	and for each $m$, the map $(t,x,z)\mapsto\hat{\alpha}^i(t,x,m,z)$ is jointly measurable with respect to $\scB([0,T])\otimes\scB(\bbR^m)\otimes\scB(\bbR^d)$. In particular, we assume for any $(\hat{\alpha}^i,t,x)\in A^i(t,x,m,z)\times[0,T],\times\bbR^m$, $f^i(t,x,m,\hat{\alpha}^i(t,x,m,z))$ is continuous with respect to $m$, uniformly continuous with respect to $z$, and $b^i(t,x,m,\hat{\alpha}^i(t,x,m,z))$ is continuous with respect to $(m,z)$.
	
	Under these assumptions, for any $(\alpha^i,\mu)\in\bbA\times\scP^H_1(\bbR^m)$, it is obvious that the FBSDE 
 \begin{align}
     \label{strong sense model}
     \begin{split}
dX^i_t&=\brak{h^i(t,X^i_t)+b^i(t,X^i_t,\mu_t,\alpha^i_t)}dt+\sigma^i(t,X^i_t) dW^i_t,\\
dY^i_t&=-f^i(t,X_t^i,\mu_t,\alpha^i_t)dt+Z^i_tdW^i_t\\
X^i_0&=x^i;\ Y_T^i=g^i(X^i_T,\mu_T)
\end{split}
 \end{align}
 has a unique strong solution. Moreover, the boundedness of $\Tilde{b}$ guarantees $\bar{f}$ and $H$ are uniformly Lipschitz in $z$, and the forward SDE in \eqref{weak formation model} has a unique strong solution. Therefore, the following BSDEs admit a unique solution in the strong sense
	\begin{align*}
		Y^{i,\alpha^i}_t&=g(X^i_T,\mu_T)+\int_t^T \Bar{f}^i(s,X^i_s,\mu_s,Z^i_s,\alpha^i_s)ds-\int_t^T Z^i_sdW^{i,\mu,\alpha}_s\\
		&=g(X^i_T,\mu_T)+\int_t^Tf^i(s,X^i_s,\mu_s,\alpha^i_s)ds-\int_t^T Z^i_sdW^i_s,
	\end{align*}
and
	\begin{align*}
	Y^{i,\hat{\alpha}^i}_t=g(X^i_T,\mu_T)+\int_t^T H(s,X^i_s,\mu_s,Z^i_s)ds-\int_t^T Z^i_sdW^{i,\mu,\hat{\alpha}}_s.
	\end{align*}
	Notice that
	\begin{align*}
Y^{i,\alpha^i}_0=\bbE\left[\int_0^Tf^i(t,X^i_t,\mu_t,\alpha^i_t)dt+g^i(X^i_T,\mu_T)\right],
	\end{align*}
which coincides with the utility function. Further by the well-known comparison principle for BSDEs, we know that $Y^{i,\hat{\alpha}^i}_0\geq Y^{i,\alpha^i}_0$ for each $\alpha^i\in \bbA$. Thus, $(\hat{\alpha}^i)_i$ are the optimal controls we seek. 

The only remaining issue is to ensure the existence of a mapping $\psi: E\rightarrow E$ such that $\psi(\mu)=\scL^H(X)$ has a fixed point. Notice that equation \eqref{strong sense model}, given $\hat{\alpha}^i$, is of the form
 \begin{align*}
dX^i_t&=\brak{h^i(t,X^i_t)+b^i(t,X^i_t,\mu_t,\hat{\alpha}^i(t,X^i_t,\mu_t,Z^i_t))}dt+\sigma^i(t,X^i_t) dW^i_t,\\
dY^i_t&=-f^i(t,X_t^i,\mu_t,\hat{\alpha}^i(t,X^i_t,\mu_t,Z^i_t))dt+Z^i_tdW^i_t\\
X^i_0&=x^i;\ Y_T^i=g^i(X^i_T,\mu_T),
 \end{align*}
 subject to $\mu_t=\scL^H(X_t)$. Under our assumptions, Theorem \ref{main theorem} applies, implying that the game has a solvable equilibrium in the strong sense.  
 In particular, for each $i\in\scO$, if $A^i(t,x,m,z)$ is a singleton and $\hat{\alpha}^i$ does not depend on $z$, and if $b^i(t,x,m,\hat{\alpha}^i(t,x,m))$ is modulus continuous with respect to $m$, then according to Corollary \ref{unique solution}, we can conclude that this game has a unique equilibrium.

	\subsection{Multi-population mean-field game and corresponding Hamiltonian system}

	We denote by $\bbA$ the set of all the admissible controls, which means  the set of processes taking values in $A$ that satisfy a prescribed set of admissibility conditions. We consider $A$ as a convex subspace of the Euclidean space $\bbR^k$. We introduce the Hamiltonian for each representative as the function $H^i:[0, T]\times\bbR^m\times \bbR^m\times\scP^H_1(\bbR^m)\times A\rightarrow\bbR$, given by
	\begin{align*}
		H^i(t,x,y,\mu,\alpha)=b^i(t,x,\mu,\alpha)\cdot y+f^i(t,x,\mu,\alpha).
	\end{align*}
For simplicity of notation, we will henceforth omit the superscript $i$ unless doing so would cause confusion.
	For an admissible process $\alpha$, let $X$ be its corresponding controlled state process. We refer to any solution $(Y,Z)$ of the BSDE
	\begin{align}
		\label{bsde hamilton}
		dY_t=-\partial_xH(t,X_t,Y_t,\mu_t,\alpha_t)dt+Z_tdW_t,\quad Y_T=\partial_xg(X_T,\mu_T),
	\end{align}
	as the adjoint processes associated with $\alpha$. The above equation is called the adjoint equation associated with the admissible control $\alpha$. Notice that when $X,\mu,\alpha$ are given, BSDE \eqref{bsde hamilton} is uniquely solvable since it is linear.

	We assume the following conditions for any given $\mu\in\scP^H_1(\bbR^m)$:
	\begin{itemize}
 \item All coefficients and their partial derivatives with respect to $x,\alpha$ are continuous with respect to $\mu$.
		\item $\partial_x g(x,\mu)$ is bounded. 
		\item $\sigma(t,x)$ is non-degenerate and locally Lipschitz with respect to $x$.
		\item $b(t,x,\mu,\alpha)$ is bounded and has bounded first order partial derivatives $\partial_x b$, and $\partial_\alpha b$. Its  second order partial derivative $\partial_{\alpha,x}^2 b$ is bounded and continuous, and we assume $\partial_{\alpha,\alpha}^2 b=0$. 
		\item For all $t\in[0,T]$ the mapping $(x,\alpha)\mapsto f(t,x,\mu,\alpha)$ is twice continuously differentiable. Moreover, $\partial_x f$ is bounded and Lipschitz continuous with respect to $\alpha$, and $\partial^2_{\alpha,x} f$ is uniformly bounded. In addition, the second-order partial derivatives of $f$ with respect to $x$ and $\alpha$ satisfy
		\begin{align*}
			\brak{
				\begin{matrix}
					\partial^2_{x,x} f& \partial^2_{\alpha,x} f\\
					\partial^2_{x,\alpha} f&\partial^2_{\alpha,\alpha} f
				\end{matrix}
			}\geq
			\brak{
				\begin{matrix}
					0&0\\
					0&\lambda \mathbf{1}_k
				\end{matrix}
			}
		\end{align*}  
		for some $\lambda>0$, where $\mathbf{1}_k$ denotes the identity matrix of dimension $k$.
	\end{itemize}
	
	\begin{Remark}
		\label{uniformly bounded y}
		From these assumptions, we know that $\partial_x b,\partial_x f$ and $\partial_x g$ are uniformly bounded. Therefore, for any $(X,\mu,\alpha)\in \bbR^m\times\scP^H_1(\bbR^m)\times A$, there exists a unique solution $(Y^{X,\mu,\alpha},Z^{X,\mu,\alpha})$ of BSDE \eqref{bsde hamilton}, and $Y^{X,\mu,\alpha}$ is uniformly bounded, namely
		\begin{align*}
			\sup_{(t,X,\mu,\alpha)\in[0,T]\times\bbR^m\times\scP^H_1(\bbR^m)\times A} \abs{Y_t^{X,\mu,\alpha}}\leq C.
		\end{align*}
	\end{Remark}
	
	\begin{lemma}
		\label{mean-field alpha}
		Under the above assumptions, for any given $\mu\in\scP^H_1(\bbR^m)$, and for all $(t,x,y,z)\in[0,T]\times\bbR^m\times \bbR^m\times\bbR^{m\times d}$, there exists a unique minimizer $\alpha^*(t,x,y,\mu)=\arg\min_{\alpha\in A} H(t,x,y,\mu,\alpha)$. Furthermore, $\alpha^*(t,0,0,0)$ is bounded and $\alpha^*(t,x,y,\mu)$ is Lipschitz continuous with respect to $(x,y)$ uniformly in $t\in[0,T]$ and continuous with respect to $\mu$.
	\end{lemma}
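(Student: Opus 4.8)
The plan is to exploit the strong convexity of the Hamiltonian in the control, which is hard-wired into the standing assumptions. First I would note that, since $\partial^2_{\alpha,\alpha}b\equiv 0$, the map $\alpha\mapsto b(t,x,\mu,\alpha)\cdot y$ is affine, while the Legendre-type condition on the Hessian of $f$, tested against vectors of the form $(0,v)$ with $v\in\bbR^k$, gives $\partial^2_{\alpha,\alpha}f\geq\lambda\mathbf{1}_k$. Hence $\partial^2_{\alpha,\alpha}H(t,x,y,\mu,\cdot)=\partial^2_{\alpha,\alpha}f\geq\lambda\mathbf{1}_k$, so $\alpha\mapsto H(t,x,y,\mu,\alpha)$ is continuous and $\lambda$-strongly convex on the closed convex set $A$. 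Strong convexity forces coercivity, so the minimum is attained at a unique $\alpha^*(t,x,y,\mu)$, characterized by the first-order variational inequality
\[
\ang{\partial_\alpha H(t,x,y,\mu,\alpha^*(t,x,y,\mu)),\,\alpha-\alpha^*(t,x,y,\mu)}\geq 0\qquad\text{for all }\alpha\in A.
\]
(The $z$ in the statement is vestigial, as $H$ does not depend on it.)

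The quantitative claims then follow by comparing minimizers through their variational inequalities. For boundedness of $\alpha^*(t,0,0,\mu)$: at $x=y=0$ one has $H(t,0,0,\mu,\alpha)=f(t,0,\mu,\alpha)$, so testing the variational inequality at a fixed $\alpha_0\in A$ and combining with $\lambda$-strong monotonicity of $\partial_\alpha f(t,0,\mu,\cdot)$ gives $\lambda|\alpha^*(t,0,0,\mu)-\alpha_0|\leq|\partial_\alpha f(t,0,\mu,\alpha_0)|$, and the right-hand side is continuous in $t$, hence bounded on $[0,T]$. For Lipschitz dependence on $(x,y)$: writing $\alpha_j^*:=\alpha^*(t,x_j,y_j,\mu)$, adding the two variational inequalities and using strong monotonicity of $\partial_\alpha H(t,x_1,y_1,\mu,\cdot)$ yields
\[
\lambda\,|\alpha_1^*-\alpha_2^*|\ \leq\ \bigl|\partial_\alpha H(t,x_1,y_1,\mu,\alpha_2^*)-\partial_\alpha H(t,x_2,y_2,\mu,\alpha_2^*)\bigr|.
\]
Since $\partial_\alpha H=(\partial_\alpha b)^\intercal y+\partial_\alpha f$, I would split the right-hand side into a $y$-increment multiplied by $\partial_\alpha b$ (bounded), an $x$-increment multiplied by $\partial^2_{\alpha,x}f$ (uniformly bounded), and an $x$-increment of $\partial_\alpha b$ paired with $y_2$, the last controlled by boundedness of $\partial^2_{\alpha,x}b$. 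Finally, continuity in $\mu$ follows from the same one-sided estimate with $(x,y)$ fixed: $\lambda|\alpha^*(t,x,y,\mu)-\alpha^*(t,x,y,\nu)|\leq|\partial_\alpha H(t,x,y,\mu,\alpha^*(t,x,y,\nu))-\partial_\alpha H(t,x,y,\nu,\alpha^*(t,x,y,\nu))|$, which tends to $0$ as $\nu\to\mu$ by continuity of $\partial_\alpha b$ and $\partial_\alpha f$ in the measure argument.

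The delicate point is the term $(\partial_\alpha b(t,x_1,\mu)-\partial_\alpha b(t,x_2,\mu))^\intercal y_2$ in the Lipschitz estimate: boundedness of $\partial^2_{\alpha,x}b$ only bounds it by $C|x_1-x_2|\,|y_2|$, so the constant is a priori of the form $C(1+|y_2|)$ rather than universal. This is harmless for the intended use, since the adjoint variable is evaluated only along the solution of \eqref{bsde hamilton}, which is uniformly bounded by Remark \ref{uniformly bounded y}; restricting $y$ to that bounded range produces a Lipschitz bound uniform in $t$. I would therefore either phrase the Lipschitz property on bounded $y$-sets or carry the factor $(1+|y|)$ through and absorb it when the bounded adjoint process is substituted.
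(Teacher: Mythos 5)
Your proof is correct, and it reaches the same conclusion by a genuinely different mechanism for the core claims. The paper handles existence, uniqueness, and the $(x,y)$-Lipschitz property via the implicit function theorem applied to the first-order condition $\partial_\alpha H(t,x,y,\mu,\alpha^*)=0$, computing $\partial_x\alpha^*=-\scJ(\partial^2_{\alpha,\alpha}H)^{-1}\scJ(\partial^2_{\alpha,x}H)$ and $\partial_y\alpha^*$ explicitly and bounding these Jacobians; only the continuity in $\mu$ is obtained there by the strong-monotonicity pairing $\lambda|\alpha^*-\alpha^{*'}|^2\leq\ang{\alpha^*-\alpha^{*'},\partial_\alpha H(\mu',\alpha^{*'})-\partial_\alpha H(\mu,\alpha^{*'})}$, which is exactly your argument for that part. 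You instead run the variational-inequality/strong-monotonicity argument uniformly through all three claims. This buys you two things: it does not presuppose that the minimizer lies in the interior of $A$ (the paper's use of $\partial_\alpha H(\alpha^*)=0$ silently assumes this, which matters when $A$ is a proper convex subset), and it delivers the Lipschitz estimate without ever differentiating $\alpha^*$. What the paper's IFT route buys is continuous differentiability of $\alpha^*$ in $(x,y)$, which is more than the lemma asserts. Your "delicate point" about the term $(\partial_\alpha b(t,x_1,\mu)-\partial_\alpha b(t,x_2,\mu))^\intercal y_2$ producing a constant of the form $C(1+|y_2|)$ is exactly the issue the paper also faces (the same factor of $y$ sits inside $\partial^2_{\alpha,x}H$), and both proofs resolve it identically by invoking the uniform bound on the adjoint process from Remark \ref{uniformly bounded y}; your explicit flagging of it is, if anything, more honest than the paper's phrasing. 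The one soft spot you share with the paper is the boundedness of $\alpha^*(t,0,0,\cdot)$: your bound $\lambda|\alpha^*(t,0,0,\mu)-\alpha_0|\leq|\partial_\alpha f(t,0,\mu,\alpha_0)|$ requires a uniform-in-$(t,\mu)$ bound on the right-hand side that is not literally among the standing assumptions (continuity in $t$ is not assumed either); the paper's "boundedness on bounded subsets" claim has the same gap, and both are harmless when $A$ is bounded.
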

	
	\begin{proof}
		Since the second partial derivative $\partial^2_{\alpha,\alpha} H=\partial^2_{\alpha,\alpha} f$ is invertible, and its inverse is uniformly bounded, the implicit function theorem implies the existence and uniqueness of the minimizer $\alpha^*(t,x,y,\mu)$ and its boundedness on bounded subsets. Moreover, since $\alpha^*(t,x,y,\mu)$ is the unique solution to equation $\partial_{\alpha}H(t,x,y,\mu,\alpha^*)=0$, the implicit function theorem also implies that $\alpha^*$ is differentiable with respect to $(x,y)$ and has a continuous derivative. Moreover, we can write $\partial_x \alpha^*(t,x,y,\mu)=-\scJ\brak{\partial^2_{\alpha,\alpha} H}^{-1}\scJ\brak{\partial^2_{\alpha,x} H}$ and $\partial_y \alpha^*(t,x,y,\mu)=-\scJ\brak{\partial^2_{\alpha,\alpha} H}^{-1}\scJ\brak{\partial^2_{\alpha,y} H}$ where $\scJ$ is the Jacobian matrix. By our boundedness assumptions on $\partial_{\alpha,x}^2 b,\partial_\alpha b,\partial_{\alpha,x}^2 f$, together with Remark \ref{uniformly bounded y}, these derivatives are globally bounded. 
		
		Next, we prove the regularity of $\alpha^*$ with respect to $\mu$.
		Fix $(t,x,y)$  and let $\mu,\mu'$ be generic elements in $\scP^H_1(\bbR^m)$. Suppose $\alpha^*,\alpha^{*'}$ are the associated minimizers. From the convexity assumption, we have
        \begin{small}
		\begin{align*}
			\lambda&\abs{\alpha^*-\alpha^{*'}}^2\leq\langle \alpha^*-\alpha^{*'},\partial_\alpha H(t,x,y,\mu,\alpha^*)- \partial_\alpha H(t,x,y,\mu,\alpha^{*'})\rangle\\
			&=\langle \alpha^*-\alpha^{*'},\partial_\alpha H(t,x,y,\mu',\alpha^{*'})- \partial_\alpha H(t,x,y,\mu,\alpha^{*'})\rangle\\
			&=\langle \alpha^*-\alpha^{*'},\partial_\alpha f(t,x,\mu',\alpha^{*'})+\partial_\alpha b(t,x,\mu',\alpha^{*'})-\partial_\alpha f(t,x,\mu,\alpha^{*'})- \partial_\alpha b(t,x,\mu,\alpha^{*'})\rangle.
		\end{align*}
        \end{small}
		Since $\partial_\alpha f$ and $\partial_\alpha b$ are continuous with respect to $\mu$, the difference on the right-hand side can be made arbitrarily small, which proves $\alpha^*$ is continuous in $\mu$.
	\end{proof}

	\begin{proposition}
		\label{proposition mean-field game}
  If $b(t,x,\mu,\alpha)=b^1(t,x,\mu)+b^2(t,\mu,\alpha)$ with $\partial_x b^1$ is bounded, and $g(x,\mu),H(t,x,y,\mu,\alpha)$ are convex with respect to $x$. Let  
  \begin{align}
          \label{alpha minimize H}
          (\alpha^i)^*(t,x,y,\mu) = \arg\min_{\alpha\in A} H^i(t,x,y,\mu,\alpha).
  		\end{align}
  Then, the following mean-field FBSDE
  	\begin{equation}\label{multi-population system}
		\begin{aligned}
				dX_t^i&=b^i(t,X^i_t,\scL^H(X_t),(\alpha^{i})^*(t,X^i_t,Y^i_t,\scL^H(X_t)))dt+\sigma^i(t,X^i_t)dW^i_t,\\
				dY^i_t&=-\partial_xH^i(t,X^i_t,Y^i_t,\scL^H(X_t),(\alpha^{i})^*(t,X^i_t,Y^i_t,\scL^H(X_t)))dt+Z^i_tdW^i_t,\\ 
				Y^i_T&=\partial_xg^i(X^i_T,\scL^H(X_T)),\quad X_0=x^i.
		\end{aligned}
	\end{equation}
  has a solution. Moreover, the process
	\[
	(\alpha^{i})^*(t,X^i_t,Y^i_t,\scL^H(X_t)),
	\]
	is optimal control for representative $i$ in the sense of Nash equilibrium.
	\end{proposition}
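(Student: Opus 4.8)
The plan is to recognize the Hamiltonian FBSDE \eqref{multi-population system} as a particular instance of the mean-field FBSDE \eqref{fbsde}, to check that its coefficients fall under Assumption \ref{existence assumption}, and then to invoke Theorem \ref{main theorem} for the existence of a strong solution $(X,Y,Z)$. The Nash property will then be obtained from the sufficient (verification) part of the Pontryagin stochastic maximum principle, using the convexity hypotheses on $g^i$ and $H^i$.

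Concretely, I would first set $b^i_\star(t,x,y,z,m):=b^i(t,x,m,(\alpha^i)^*(t,x,y,m))$, $f^i_\star(t,x,y,z,m):=\partial_x H^i(t,x,y,m,(\alpha^i)^*(t,x,y,m))$ and $g^i_\star(x,m):=\partial_x g^i(x,m)$, so that \eqref{multi-population system} is exactly \eqref{fbsde} with these coefficients and with the adjoint variable playing the role of $Y$ (hence $n=m$). Then I would verify Assumption \ref{existence assumption} term by term: boundedness of $b^i$ and compactness of $A$ give $|b^i_\star|\le C$, so one takes $r>0$ with $\rho_r\equiv 0$; boundedness of $\partial_x g$ gives $|g^i_\star|\le C$; and since $\partial_x H^i=\partial_x b^i\cdot y+\partial_x f^i$ with $\partial_x b^i,\partial_x f^i$ bounded, we get $|f^i_\star|\le C(1+|y|)$. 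Uniform ellipticity and local Lipschitz continuity of $\sigma^i$ together with the absence of a $h^i$-term (equivalently $h^i\equiv 0$, which is bounded) cover the $\sigma$-condition; the assumed continuity of all coefficients and their $x$-derivatives in $\mu$, combined with the continuity of $(\alpha^i)^*$ in $(x,y,\mu)$ from Lemma \ref{mean-field alpha}, gives the required continuity of $b^i_\star,f^i_\star,g^i_\star$ in the remaining variables.

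The delicate point — and the step I expect to be the main obstacle — is that Assumption \ref{existence assumption} requires $f^i_\star$ to be Lipschitz in $(y,z)$ (or merely uniformly continuous when $m=1$), whereas a priori $f^i_\star(t,x,y,z,m)=\partial_x b^i(t,x,m,(\alpha^i)^*(t,x,y,m))^\intercal y+\partial_x f^i(t,x,m,(\alpha^i)^*(t,x,y,m))$ contains the product of $(\alpha^i)^*(t,x,y,m)$ with $y$, which is not globally Lipschitz in $y$ in general. This is precisely where the structural hypothesis $b=b^1(t,x,\mu)+b^2(t,\mu,\alpha)$ enters: it forces $\partial_x b^i=\partial_x b^{1,i}(t,x,m)$ to be independent of $\alpha$, hence of $y$, so the first term reduces to the linear map $y\mapsto\partial_x b^{1,i}(t,x,m)^\intercal y$, which is globally Lipschitz because $\partial_x b^1$ is bounded; the second term is Lipschitz in $y$ because $\partial_x f^i$ is Lipschitz in $\alpha$ (bounded $\partial^2_{\alpha,x}f$) and $(\alpha^i)^*$ is Lipschitz in $(x,y)$ by Lemma \ref{mean-field alpha}. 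Since there is no $z$-dependence, Lipschitz continuity in $z$ is immediate. With Assumption \ref{existence assumption} verified, Theorem \ref{main theorem} gives a strong solution $(X,Y,Z)\in\bbH^2\times\bbH^2\times\bbH^2$ of \eqref{multi-population system}, and $Y^i$ is uniformly bounded by the standard BSDE estimate (driver controlled by $C(1+|y|)$, bounded terminal value), as in Remark \ref{uniformly bounded y}.

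For the equilibrium property, I would set $\hat\mu_t:=\scL^H(X_t)$ and $\hat\alpha^i_t:=(\alpha^i)^*(t,X^i_t,Y^i_t,\hat\mu_t)$; these are admissible, being $A$-valued, progressively measurable, and square-integrable (since $\alpha^*$ has at most linear growth in $(x,y)$, $X\in\bbH^2$, and $Y$ is bounded). Fixing $i$ and freezing the population flow at $\hat\mu$, the process $X^i$ is the state controlled by $\hat\alpha^i$, $(Y^i,Z^i)$ is the adjoint process \eqref{bsde hamilton} associated with $\hat\alpha^i$, and by \eqref{alpha minimize H} one has $\hat\alpha^i_t\in\arg\min_{\alpha\in A}H^i(t,X^i_t,Y^i_t,\hat\mu_t,\alpha)$ for a.e.\ $(t,\omega)$. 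Since $g^i(\cdot,\hat\mu_T)$ and $x\mapsto H^i(t,x,\cdot,\hat\mu_t,\cdot)$ are convex, the sufficient part of the Pontryagin stochastic maximum principle applies: comparing $J^i(\hat\alpha^i,\hat\mu)$ with $J^i(\beta^i,\hat\mu)$ for an arbitrary admissible $\beta^i$ via It\^{o}'s formula applied to $\ang{Y^i_t,\,X^{i,\hat\alpha^i}_t-X^{i,\beta^i}_t}$ and the subgradient inequalities from convexity shows that $\hat\alpha^i$ is optimal for representative $i$ against $\hat\mu$ (finiteness of $J^i(\beta^i,\hat\mu)$ follows from the $|x|^r$-growth of $f^i,g^i$ and the moment bounds on $X^{i,\beta^i}$ guaranteed by boundedness of $b^i$ and ellipticity of $\sigma^i$). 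Finally, $\hat\mu_t=\scL^H(X_t)$ is exactly the consistency constraint in \eqref{multi-population utility}, so $\big((\alpha^i)^*(t,X^i_t,Y^i_t,\scL^H(X_t))\big)_{i\in\scO}$ is a Nash equilibrium, completing the proof.
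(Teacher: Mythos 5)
Your proposal is correct and follows essentially the same route as the paper: verify Assumption \ref{existence assumption} for the Hamiltonian system (using the structural hypothesis on $b$ and Lemma \ref{mean-field alpha} to get Lipschitz continuity of $\partial_x H^i$ in $y$, plus Remark \ref{uniformly bounded y} for the boundedness of $Y$) and then invoke Theorem \ref{main theorem}, with the Nash property obtained from the convexity-based sufficient Pontryagin principle via integration by parts. The only cosmetic difference is that the paper takes $r=0$ in Assumption \ref{existence assumption} while you take $r>0$; both choices work here since all the relevant coefficients are bounded.
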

	\begin{proof}
	By abuse of notation, we will denote $(\alpha^i)^*_t:=(\alpha^i)^*(t, X^i_t, Y^i_t,\scL^H(X_t))$
	 Firstly, we verify that $((\alpha^i_t)^*)_{0\leq t\leq T}$ is an optimal control for the representative $i$, assuming that \eqref{multi-population system} has a solution. Let us omit the superscript $i$ for simplicity in notation. Let $\alpha'\in\bbA$ be a generic admissible control, and denote by $X'$ the associated controlled process. By integration by parts  and the convexity of the functions, we have
		\begin{align*}
			\bbE &\left[g(X_T,\mu_T)-g(X'_T,\mu_T)\right]
			\leq\bbE\left[\partial_xg(X_T,\mu_T)(X_T-X'_T)\right]\\
			&=\bbE \left[Y_T(X_T-X'_T)\right]=\bbE\left[\int_0^T(X_t-X'_t)dY_t+\int_0^TY_td[X_t-X'_t]\right]\\
			&=\bbE\int_0^T\brak{Y_t(b(t,X_t,\mu_t,\alpha^*_t)-b(t,X'_t,\mu_t,\alpha'_t))
			-(X_t-X'_t)\partial_xH(t,X_t,Y_t,\mu_t,\alpha^*_t)}dt.
		\end{align*}
		Similarly, we deduce that
        \begin{small}
		\begin{align*}
			\bbE&\left[\int_0^T f(t,X_t,\mu_t,\alpha^*_t)-f(t,X'_t,\mu_t,\alpha'_t)dt\right]\\
			=&\bbE\int_0^T\brak{H(t,X_t,Y_t,\mu_t,\alpha^*_t)-H(t,X'_t,Y_t,\mu_t,\alpha'_t)
			-Y_t\brak{b(t,X_t,\mu_t,\alpha^*_t)-b(t,X'_t,\mu_t,\alpha'_t)}}dt.
		\end{align*}
        \end{small}
		Therefore, by \eqref{alpha minimize H} and the convexity assumption, we obtain
        \begin{small}
		\begin{align*}
			J(\alpha^*&)-J(\alpha')=\bbE\left[ g(X_T,\mu_T)-g(X'_T,\mu_T)\right]+\bbE\left[\int_0^Tf(t,X_t,\mu_t,\alpha^*_t)-f(t,X'_t,\mu_t,\alpha'_t)dt\right]\\
			&\leq\bbE\int_0^T\brak{H(t,X_t,Y_t,\mu_t,\alpha^*_t)-H(t,X'_t,Y_t,\mu_t,\alpha'_t)-(X_t-X'_t)\partial_xH(t,X_t,Y_t,\mu_t,\alpha^*_t)}dt\\
			&\leq 0.
		\end{align*}
        \end{small}
        This establishes that $\alpha^*$ as a control process is indeed optimal.
        
		Next, we verify the solvability of FBSDE \eqref{multi-population system}. Notice that $\partial_x f$ is Lipchitz continuous with respect to $\alpha$, and by Lemma \ref{mean-field alpha}, $\alpha^{*}(t,x,y,\mu)$ is Lipchitz continuous with respect to $(x,y)$. Therefore, combined with the fact that $\partial_x b^1$ is bounded, we conclude $\partial_x H(t,x,y,\mu,\alpha(t,x,y,\mu))$ is Lipschitz with respect to $y$.
		Moreover, we have $b,\partial_x f,\partial_x g$ are all bounded; $\partial_x f,\partial_x b,b$ are continuous with respect to $\mu$; and $Y$ is bounded from Lemma \ref{uniformly bounded y}. Thus, the coefficients satisfy Assumption \ref{existence assumption} with $r=0$.
	\end{proof}

Note that we do not necessarily require $b$ to be bounded, nor do we require its coefficients to be continuously differentiable to establish the existence of an equilibrium for the MPMFG.

\begin{Example}
\begin{sloppypar}
We consider an MPMFG where each representative $i$'s state process satisfying the following one-dimensional mean-field SDE
\end{sloppypar}
\begin{align*}
    dX^i_t = \left(b^i(t,X^i_t,\mathcal{L}^H(X_t))-\alpha^i_t\right)dt + \sigma^i dW^i_t, \quad X^i_0=x^i,
\end{align*}
and each representative $i$ tries to minimize the cost functional
\begin{align*}
    J^i(\alpha^i) = \mathbb{E}\left[\int_0^T \left(|\alpha^i_t|^2 + f^i(t,X^i_t,\mathcal{L}^H(X_t))\right)dt\right].
\end{align*}
\begin{sloppypar}
For each $i\in\mathcal{O}$, we assume that $b^i(t,x,\mu)=b^{i,1}(t,x)+b^{i,2}(t,x,\mu)$, with $|b^{i,1}(t,x)|\leq C(1+|x|)$ and $b^{i,2}$ is bounded. Further, assume that the functions $b^i,f^i:[0,T]\times\mathbb{R}\times\mathcal{P}_1^H(\mathbb{R})\rightarrow\mathbb{R}\times\mathbb{R}^+$ are uniformly Lipschitz, convex (possibly not continuously differentiable) in $x$, and continuous in $\mu$. Moreover, assume $f^i$ is non-decreasing in $x$.
\end{sloppypar}

Let $\partial_+$ denote the right derivative and consider the following coupled mean-field FBSDE
			\begin{align}
				\begin{split}
					\label{example fbsde}
					dX^i_t&=\brak{b^i(t,X^i_t,\scL^H(X_t))-\frac{\brak{Y^i_t\vee 0}}{2}}dt+\sigma^idW^i_t;\quad X^i_0=x^i\\
					dY^i_t&=-\brak{\partial_+ f^i(t,X^i_t,\scL^H(X_t))+\partial_+ b^i(t,X^i_t,\scL^H(X_t))Y^i_t}dt+Z^i_tdW^i_t;\quad Y^i_T=0.
				\end{split}
			\end{align}
The FBSDE \eqref{example fbsde} has a solution $(X,Y,Z)$ with $Y^i_t$ bounded for all $t\in[0,T]$, and $(\alpha^i_t)^*:=\frac{\brak{Y^i_t\vee 0}}{2}$ for each $i$ constitute a Nash equilibrium.
\end{Example}
\begin{proof}
First, the Hamiltonian associated with this problem and its minimizer are given by
\begin{align*}
    H^i(t,x,y,\mu,\alpha)&=\left(b^i(t,x,\mu)-\alpha\right)y + f^i(t,x,\mu) + |\alpha|^2,\\
    \arg\min_{\alpha}H^i(t,x,y,\mu,\alpha)&=\frac{y\vee 0}{2}.
\end{align*}
Therefore, the FBSDE \eqref{example fbsde} can be constructed similarly to previous arguments.

Then by our assumptions, there exists a suitable constant $C$ such that $\partial_+ f^i\in[0, C]$ and $\partial_+ b^i\in[-C,C]$. If FBSDE \eqref{example fbsde} is solvable, by the well-known comparison principle, we have $Y^d_t\leq Y^i_t\leq Y^u_t$ for all $t\in[0,T]$, where 
			
			\begin{align*}
				dY^u_t&=-\brak{C+C|Y^u_t|}dt+Z^u_tdW_t;&Y^u_T&=0\\
				dY^d_t&=C|Y^d_t|dt+Z^d_tdW_t;&Y^d_T&=0.
			\end{align*}
			Solving these yields $Y^u_t=e^{C(T-t)}-1\leq e^{CT}$ and $Y^d\equiv0$.

Based on above observation, we now prove the existence result of a solution to FBSDE \eqref{example fbsde} via a localization argument. Define $C^u$ as the boundary for $Y^i$ obtained above, and let $\varphi$ be a smooth function on $\bbR$ such that
			\[
			\varphi(y)= 
			\begin{cases}
				y,& \text{if } y\in[0,C^u]\\
				0,              & \text{if } y\in(-\infty,-1]\cup[C^u+1,\infty)
			\end{cases}
			\]
			and $|\varphi(y)|\leq |y|$ for all $y \in \mathbb{R}$.
			Consider the mean-field FBSDE
			\begin{equation}\label{localepi}
				\begin{aligned}
					d\tilde X^i_t&=\brak{b^i(t,\tilde X^i_t,\scL^H(\tilde X_t))-\frac{\varphi(\tilde Y^i_t)}{2}}dt+\sigma^i dW^i_t; &\tilde X^i_0&=x^i\\
					d\tilde Y^i_t&=-\brak{\partial_{+}f(t,\tilde X^i_t,\scL^H(\tilde X_t))+\partial_+ b(t,\tilde X^i_t,\scL^H(\tilde X_t))\varphi(\tilde Y^i_t)}dt+\tilde Z^i_tdW^i_t;&\tilde Y^i_T&=0.
				\end{aligned}
			\end{equation}
			
			One can verify that the coefficients of \eqref{localepi} satisfy Assumption \ref{existence assumption} with $r = 0$, ensuring that $(\tilde X^i,\tilde Y^i,\tilde Z^i)$ is a solution to \eqref{localepi}. Moreover, applying the same comparison argument again, we have $\tilde Y^i_t \in [0, C^u]$ for all $t \in [0, T]$. Thus, $(\tilde X^i,\tilde Y^i,\tilde Z^i)$ also solves the original FBSDE \eqref{example fbsde}.

Finally, the optimality claim $J^i((\alpha^i)^*) \leq J^i(\alpha^i)$ for all $\alpha^i \in \mathbb{A}$ can be verified by arguments similar to those used in the proof of Proposition \ref{proposition mean-field game}.
\end{proof}

\appendix
	\section{Measure Change of FBSDE}
	\label{measure change}
	In this subsection, we provide sufficient conditions that guarantee the existence and uniqueness of a strong solution under the Girsanov transform. We assume the following conditions:
	\begin{itemize}
		\item[(H1)] The SDE
		\begin{align*}
			dF_t=h(t, F_t)dt+\sigma(t,F_t)dW_t;\quad F_0=x
		\end{align*}
		has a strong solution and pathwise uniqueness holds.
		\item[(H2)] We denote $\Tilde{b}(t,x,y,z,m)=(\sigma)^\intercal(\sigma(\sigma)^\intercal)^{-1}(t,x)b(t,x,y,z,m)$. Given strong solution $F$ from {\rm (H1)}, there exist Borel measurable functions $(u,d):[0,T]\times\bbR^m\to\bbR^n\times\bbR^{n\times d}$ such that $U_t=u(t,F_t)$ and $V_t=d(t,F_t)$ form a strong solution of the BSDE
		\begin{align*}
			dU_t&=-\brak{f(t,F_t,U_t,V_t)+V_t\Tilde{b}(t,F_t,U_t,V_t)}dt+V_tdW_t;  &U_T&=g(F_T).
		\end{align*}
		\item[(H3)] For $(F,U,V)$ in {\rm (H1)} and {\rm (H2)}, the process
		\begin{align*}
			\scE\brak{\int_0^\cdot \Tilde{b}(s,F_s, U_s,V_s)^\intercal d W_s}
		\end{align*}
		is a martingale on $[0,T]$.
		\item[(H4)] For $u,d$ in (H3), the forward SDE
		\begin{align*}
			d\tilde F_t&=\brak{h(t,\tilde F_t)+b(t,\tilde F_t, u(t,\tilde F_t),d(t,\tilde F_t))}dt+\sigma(t,\tilde F_t)dW_t; &\tilde F_0&=x
		\end{align*}
		has a pathwise unique strong solution $\tilde F$.
	\end{itemize}
	\begin{lemma}
		\label{measure change lemma}
		Assume  {\rm (H1)--(H4)}. Then, the FBSDE
		\begin{align}\label{fbsde1}
			\begin{split}dX_t&=\brak{h(t,X_t)+b(t,X_t,Y_t,Z_t)}dt+\sigma(t,X_t) dW_t;\  X_0=x\\
				dY_t&=-f(t,X_t,Y_t,Z_t)dt+Z_tdW_t;\ Y_T=g(X_T)
			\end{split}
		\end{align}
		has a strong solution $(X,Y,Z)$ satisfying {\rm (H3)}, with $(Y,Z)= (u(t,X_t),d(t,X_t))$. In addition, if we assume that for any It\^{o} process $I$,
		\begin{align*}
			dY_t=-f(t,I_t,Y_t,Z_t)dt+Z_tdW_t;\ Y_T=g(I_T)
		\end{align*}
		has a unique strong solution, then \eqref{fbsde1} has a unique strong solution $(X,Y,Z)$ such that $\scE\brak{-\int_0^\cdot g(s,X_s, Y_s,Z_s)^\intercal d W_s}$ is a martingale on $[0,T]$. Moreover, it also implies that the decoupled FBSDE
		\begin{align*}
        dX_t&=h(t,X_t)dt+\sigma(t,X_t)dB_t;\quad X_0=x\\
			dY_t&=-\brak{f(t,X_t,Y_t,Z_t)+Z_t\Tilde{b}(t,X_t,Y_t,Z_t)}dt+Z_tdB_t;  \ Y_T=g(X_T),
		\end{align*}
		where $B$ is a Brownian motion, has a unique strong solution.
	\end{lemma}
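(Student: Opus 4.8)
The plan is to run the Girsanov drift transformation in both directions, using the Yamada--Watanabe functional representation of strong solutions to pass between the ``decoupled'' forward SDE of (H1) and the coupled system \eqref{fbsde1}. For existence, I would start from the solution $F$ of (H1) and the pair $(U,V)=(u(\cdot,F),d(\cdot,F))$ of (H2) on $(\Omega,\scF,\bbP,W)$. By (H3) the exponential $\scE\brak{\int_0^\cdot\Tilde b(s,F_s,U_s,V_s)^\intercal dW_s}$ is a genuine martingale, so it defines a measure $\bbQ\sim\bbP$ under which $W^{\bbQ}_t:=W_t-\int_0^t\Tilde b(s,F_s,U_s,V_s)\,ds$ is Brownian. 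Substituting $dW=dW^{\bbQ}+\Tilde b\,dt$ and using the identity $\sigma\Tilde b=b$, a direct computation shows that $(F,U,V)$ solves \eqref{fbsde1} driven by $W^{\bbQ}$ under $\bbQ$ --- a weak solution. Since $U=u(\cdot,F)$ and $V=d(\cdot,F)$, the forward equation of $F$ under $\bbQ$ is precisely the SDE of (H4); its pathwise uniqueness, together with existence of a strong solution, yields (Yamada--Watanabe) a Borel functional $\Phi$, not depending on the probability space, with $F=\Phi(W^{\bbQ})$ $\bbQ$-a.s.

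I would then transport this back to the original space by setting $X:=\Phi(W)$, $Y:=u(\cdot,X)$, $Z:=d(\cdot,X)$. By definition of $\Phi$, $X$ solves the SDE of (H4), which is the forward equation of \eqref{fbsde1} with $(Y,Z)$ inserted; and since $W$ under $\bbP$ and $W^{\bbQ}$ under $\bbQ$ are both Brownian motions while $X,Y,Z$ and the relevant It\^o integrals (which, for an adapted integrand that is a functional of the driver, are a.s.\ fixed measurable functionals of the Brownian path) are functionals of the driver, the backward identity $Y_t=g(X_T)+\int_t^T f(s,X_s,Y_s,Z_s)\,ds-\int_t^T Z_s\,dW_s$ transports from its $\bbQ$-version, so $(X,Y,Z)$ is a \emph{strong} solution. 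The martingale property demanded of the solution then comes for free: expressing $d\bbQ/d\bbP$ through $W^{\bbQ}$ gives $d\bbP/d\bbQ=\scE_T\brak{-\int_0^\cdot\Tilde b(s,F_s,U_s,V_s)^\intercal dW^{\bbQ}_s}$, whose $\bbQ$-expectation is $1$; being a nonnegative local martingale with constant expectation it is a true $\bbQ$-martingale, and the same transport shows $\scE\brak{-\int_0^\cdot\Tilde b(s,X_s,Y_s,Z_s)^\intercal dW_s}$ is a $\bbP$-martingale --- the version of (H3) relevant for a solution of the coupled system, and exactly the condition appearing in the uniqueness statement.

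For uniqueness, I would take any strong solution $(X,Y,Z)$ of \eqref{fbsde1} with $\scE\brak{-\int_0^\cdot\Tilde b(s,X_s,Y_s,Z_s)^\intercal dW_s}$ a martingale and let $\bar\bbP$ be the induced measure, under which $\bar W_t:=W_t+\int_0^t\Tilde b(s,X_s,Y_s,Z_s)\,ds$ is Brownian. Using $\sigma\Tilde b=b$ once more, $X$ solves the SDE of (H1) driven by $\bar W$, so pathwise uniqueness in (H1) forces $X=\Psi(\bar W)$ for the Yamada--Watanabe functional $\Psi$; and since under $\bar\bbP$ the pair $(Y,Z)$ solves the $(f+z\Tilde b)$-driven BSDE attached to $X=\Psi(\bar W)$, the assumed uniqueness of the $f$-driven BSDE (which carries over to this drift-perturbed BSDE through the same change of measure) combined with (H2) gives $(Y,Z)=(u(\cdot,X),d(\cdot,X))$. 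But then, back on $(\bbP,W)$, $X$ solves the SDE of (H4), whose solution is pathwise unique, so $(X,Y,Z)$ is determined by $W$. The decoupled FBSDE at the end is handled the same way: its forward equation is the SDE of (H1), so (H2) supplies the solution $\brak{\Psi(B),u(\cdot,\Psi(B)),d(\cdot,\Psi(B))}$, while the change of measure generated by $\scE\brak{\int_0^\cdot\Tilde b(\cdot,X,Y,Z)^\intercal dB}$ puts its strong solutions (with the matching exponential a martingale) in bijection with those of \eqref{fbsde1}, so the uniqueness just proved descends.

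The hard part will not be the algebra of the two changes of measure but keeping the solutions \emph{strong} throughout: transporting pathwise identities --- above all the stochastic integrals --- across $\bbP\leftrightarrow\bbQ$ requires writing every process as a measurable functional of its driving Brownian motion (through pathwise uniqueness, Yamada--Watanabe and adaptedness) and invoking that the It\^o integral of such an integrand is itself an a.s.-defined functional of the Brownian path; one must also keep careful track of which of the two Dol\'eans--Dade exponentials $\scE\brak{\pm\int_0^\cdot\Tilde b^\intercal dW}$ is the genuine martingale at each stage, since in general only the one that ``undoes'' the drift is, which is why the uniqueness assertion has to be stated and proved inside the class of solutions for which this exponential is a true martingale.
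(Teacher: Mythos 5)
Your proof is correct and takes essentially the approach the paper intends: the paper proves this lemma only by citing Lemma 4.1 of \cite{nam2022coupled}, whose argument is exactly your two-way Girsanov transformation combined with the Yamada--Watanabe functional representation, with the closing decoupled statement handled as in Proposition \ref{unique decoupled fbsde}. You also correctly isolate the delicate points --- transporting the It\^o integrals as a.s.-defined path functionals across the equivalent measures, and restricting uniqueness to the class where the drift-removing exponential is a true martingale (note the $g$ in the lemma's displayed exponential $\scE\brak{-\int_0^\cdot g(s,X_s,Y_s,Z_s)^\intercal dW_s}$ is a typo for $\Tilde{b}$).
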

	\begin{proof}
		See Lemma 4.1 in \cite{nam2022coupled}. One can easily prove the final statement by applying a similar idea as in Proposition \ref{unique decoupled fbsde}.
	\end{proof}
	
\bibliographystyle{apalike}
	\bibliography{main}
    \end{document}